\theoremstyle{plain}
\newtheorem{theorem}{Theorem}[section]
\newtheorem{lemma}[theorem]{Lemma}
\newtheorem{corollary}[theorem]{Corollary}
\newtheorem{conjecture}[theorem]{Conjecture}
\newtheorem{proposition}[theorem]{Proposition}
\theoremstyle{definition}
\newtheorem{definition}[theorem]{Definition}
\newtheorem{example}[theorem]{Example}
\newtheorem{notation}[theorem]{Notation}
\theoremstyle{remark}
\newtheorem{remark}[theorem]{Remark}
\newtheorem*{conventions}{Notations and conventions}
\newtheorem*{acknowledgements}{Acknowledgements}
\newtheorem*{structure}{Structure of the paper}
\title[The classification of smooth well-formed Fano weighted complete intersections]{The classification of smooth well-formed \\ Fano weighted complete intersections}
\author{Mikhail Ovcharenko}
\address{Steklov Mathematical Institute of Russian Academy of Sciences, 8 Gubkina street, Moscow 119991, Russia}
\email{ovcharenko@mi-ras.ru}
\DeclareMathOperator{\Ass}{Ass}
\DeclareMathOperator{\Bs}{Bs}
\DeclareMathOperator{\Cl}{Cl}
\DeclareMathOperator{\codim}{codim}
\DeclareMathOperator{\coind}{coind}
\DeclareMathOperator{\corank}{corank}
\DeclareMathOperator{\HS}{HS}
\DeclareMathOperator{\height}{ht}
\DeclareMathOperator{\im}{im}
\DeclareMathOperator{\lcm}{lcm}
\DeclareMathOperator{\Mac}{Mac}
\DeclareMathOperator{\Mat}{Mat}
\DeclareMathOperator{\MHS}{MHS}
\DeclareMathOperator{\Pic}{Pic}
\DeclareMathOperator{\Proj}{Proj}
\DeclareMathOperator{\pr}{prime}
\DeclareMathOperator{\pt}{pt}
\DeclareMathOperator{\Sing}{Sing}
\DeclareMathOperator{\Spec}{Spec}
\DeclareMathOperator{\rad}{rad}
\DeclareMathOperator{\rank}{rank}
\DeclareMathOperator{\var}{var}
\begin{document}

\begin{abstract}
  We show that the set of families of smooth well-formed Fano weighted complete intersections admits a natural partition with respect to the variance \(\var(X) = \coind(X) - \codim(X)\). Moreover, we obtain the classification of smooth well-formed Fano weighted complete intersections of small variance.

  We also prove that the anticanonical linear system on a smooth well-formed Fano weighted complete intersection of anticanonical degree one is never base-point free.
\end{abstract}

\maketitle

\section{Introduction}

Fano varieties constitute a fundamental part of the classification of algebraic varieties. The number of deformation classes of smooth Fano varieties of given dimension is finite (see~\cite[Theorem~0.2]{kollar/boundness}), so that one can hope for their explicit classification. A basic invariant of a smooth Fano variety \(X\) is its \emph{index}: the largest natural number \(i_X > 0\) such that \(- K_X\) is divisible by \(i_X\) in \(\Pic(X)\).

Smooth Fano varieties of large index can be explicitly described.

\begin{theorem}[{\cite{kobayashi/characterizations}}]
  For any smooth Fano variety \(X\) of dimension \(n\) we have the estimate \(i_X \leqslant n + 1\). Moreover, the following assertions hold:
  \begin{itemize}
  \item \(i_X = n + 1\) if and only if \(X \simeq \mathbb{P}^{n}\);
  \item \(i_X = n\) if and only if \(X \subset \mathbb{P}^{n + 1}\) is a smooth quadric of dimension \(n > 1\).
  \end{itemize}
\end{theorem}

The number \(\coind(X) = \dim(X) + 1 - i_X\) is usually referred to as the \emph{coindex} of a smooth Fano variety \(X\). In these terms, the result of Kobayashi--Ochiai describes smooth Fano varieties of coindex 0 and 1, respectively. Smooth Fano varieties of coindex 2 were classified by Iskovskikh and Fujita (see~\cite[Theorem~3.3.1]{iskovskikh/fano}).

The goal of this paper is to describe the classification of smooth Fano varieties which can be realised as complete intersections in a weighted projective space. We restrict ourselves to the case of \emph{well-formed} weighted complete intersections: they are well-behaved with respect to the description in terms of weights and degrees (see Definition~\ref{definition:WF-subscheme} and Example~\ref{example:WCI-non-WF-pathologies}). Moreover, we assume that our weighted complete intersections are not \emph{intersections with linear cones} (see Definition~\ref{definition:WCI-degenerate}), which is a straightforward generalisation of the notion of a non-degenerate projective variety. The latter condition is rather mild: it can always be assumed provided that a weighted complete intersection is general enough (see~Proposition~\ref{proposition:WCI-general-QS-implies-WF}).

By definition we can associate to a weighted complete intersection \(X_{\mu} \subset \mathbb{P}(\rho)\) in the weighted projective space \(\mathbb{P}(\rho)\) the weights \(\rho = (a_0, \ldots, a_N)\) of \(\mathbb{P}(\rho)\) and the \emph{multidegree} \(\mu = (d_1, \ldots, d_c)\) of \(X\) (see Definition~\ref{definition:WCI}). Weighted complete intersections in \(\mathbb{P}(\rho)\) of given multidegree \(\mu\) form a family \(\mathcal{F}^{\rho}_{\mu}\) (in the sense of Definition~\ref{definition:WCI-family}). Throughout this paper the codimension of \(X\) is defined as \(\codim(X) = \dim(\mathbb{P}(\rho)) - \dim(X) = c\).

Any smooth well-formed Fano weighted complete intersection of dimension at least 2 which is not an intersection with a linear cone is contained in a unique family \(\mathcal{F}^{\rho}_{\mu}\) (see Proposition~\ref{proposition:WCI-QS-WF-implies-unique} and Remark~\ref{remark:WCI-QS-WF-implies-unique}). In this paper we show that the set of these families has a natural structure. The starting point here is the following result of Przyjalkowski--Shramov which can be considered as an analogue of the classification of smooth Fano varieties of minimal coindex.

\begin{theorem}[{\cite[Theorem~1.1]{przyjalkowski/codimension}}]\label{theorem:WCI-fano-small-variance}
  Let \(X \subset \mathbb{P}(\rho)\) be a smooth well-formed Fano weighted complete intersection of dimension at least 2 which is not an intersection with a linear cone.

  The following assertions hold.
\begin{enumerate}
\item One has \(\codim(X) \leqslant \coind(X)\).
\item If \(\codim(X) = \coind(X)\), then \(X\) is a complete intersection of \(\coind(X)\) quadrics in \(\mathbb{P}(\rho) = \mathbb{P}^{\dim(\mathbb{P}(\rho))}\).
\item Suppose that \(\codim(X) = \coind(X) - 1\), and \(\codim(X) > 1\). Then \(X\) is a complete intersection of \(\coind(X) - 2\) quadrics and a cubic in \(\mathbb{P}(\rho) = \mathbb{P}^{\dim(\mathbb{P}(\rho))}\).
\end{enumerate}
\end{theorem}

\begin{remark}
  Let \(X \subset \mathbb{P}(\rho)\) be a smooth well-formed Fano weighted complete intersection which is not an intersection with a linear cone. Assume that \(\codim(X) = 1\), and \(\coind(X) = 2\). Then~\cite[Corollary 3.8]{przyjalkowski/automorphisms} implies that \(X\) is a sextic hypersurface in \(\mathbb{P}(1^{(i_X + 1)}, 2, 3)\) or a quartic hypersurface in \(\mathbb{P}(1^{(i_X + 2)}, 2)\).
\end{remark}

\begin{definition}
  Let \(X \subset \mathbb{P}(\rho)\) be a smooth well-formed Fano weighted complete intersection which is not an intersection with a linear cone. We define the \emph{variance} of \(X\) as
  \[
    \var(X) =
    \begin{cases}
      \coind(X) - \codim(X) & \text{ if } \dim(X) > 1, \\
      0 & \text{ otherwise}.
    \end{cases}
  \]
\end{definition}

\begin{notation}
  We denote by \(\Theta\) the set of families \(\mathcal{F}^{\rho}_{\mu}\) of weighted complete intersections in \(\mathbb{P}(\rho)\) of multidegree~\(\mu\) (in the sense of Definition~\ref{definition:WCI-family}) such that there exists a smooth well-formed Fano weighted complete intersection \(X \in \mathcal{F}^{\rho}_{\mu}\) which is not an intersection with a linear cone.
\end{notation} 

Variance of a smooth well-formed Fano weighted complete intersection is a characteristic of the corresponding family \(\mathcal{F}^{\rho}_{\mu} \in \Theta\) of weighted complete intersections (cf. Corollary~\ref{corollary:WCI-fano-index}):
\[
  \var \colon \mathcal{F}^{\rho}_{\mu} \mapsto
  \sum_{j = 1}^c (d_j - 2) - \sum_{i = 0}^N (a_i - 1); \quad
  \rho = (a_0, \ldots, a_N), \quad \mu = (d_1, \ldots, d_c).
\]

\begin{remark}
  Instead of variance one can work with the difference \(\dim(X) - \codim(X) = \var(X) + (i_X - 1)\), where \(\var(X) \geqslant 0\) by Theorem~\ref{theorem:WCI-fano-small-variance}, and \(i_X > 0\) by definition. For example, see~\cite[Lemma~3.9]{przyjalkowski/codimension}.
\end{remark}

Our main goal is to explicitly construct a partition of the set \(\Theta\) into subsets of families of constant variance (Propositions~\ref{proposition:WCI-fano-indexes} and~\ref{proposition:WCI-fano-structure}), and bound their number in terms of variance (Theorems~\ref{theorem:WCI-fano-s2-bound} and~\ref{theorem:WCI-fano-degree-one}).

\begin{notation}
  Let \(\mathcal{F} = \mathcal{F}^{\rho}_{\mu}\) be a family of weighted complete intersections. We introduce the following families of weighted complete intersections:
  \[
    \mathcal{F}^{(l)}_{(m)} = \mathcal{F}^{\widetilde{\rho}}_{\widetilde{\mu}}, \quad
    \widetilde{\rho} = (1^{(l + 2 m)}; \rho), \quad \widetilde{\mu} = (2^{(m)}; \mu); \quad
    l, m \in \mathbb{Z}_{\geqslant 0}.
  \]
\end{notation}

We can explicitly describe when the map \(\mathcal{F} \mapsto \mathcal{F}^{(l)}_{(m)}\) is well-defined on the set \(\Theta\).

\begin{definition}\label{definition:WCI-amenable}
  We refer to a smooth well-formed weighted complete intersection \(X \subset \mathbb{P}(a_0, \ldots, a_N)\) as
  \begin{itemize}
  \item \emph{amenable} if \(X\) is non-degenerate (see Definition~\ref{definition:WCI-degenerate}), and \(\vert \mathcal{O}_X(1) \vert\) is base-point free;
  \item \emph{strictly amenable} if \(X\) is amenable, and the following conditions hold:
    \begin{itemize}
    \item \(X\) is not an intersection with a linear cone;
    \item we have \(a_i \neq 2\) for any \(i = 0, \ldots, N\).
    \end{itemize}
  \end{itemize} 
\end{definition}

\begin{remark}
  If a family \(\mathcal{F} \in \Theta\) contains a (strictly) amenable Fano weighted complete intersection, then its general element is also (strictly) amenable and Fano (see Corollary~\ref{corollary:WCI-smoothness-genericity}).
\end{remark}

\begin{example}\label{example:WCI-non-amenable}
  We have the following examples of (non-)amenable weighted complete intersections.
  \begin{enumerate}
  \item A smooth well-formed hypersurface \(X_{2 m} \subset \mathbb{P}(1^{(l + 1)}, m)\) of degree \(2 m\) is amenable for any \(l,m \in \mathbb{Z}_{> 0}\). It is a double cover of \(\mathbb{P}^l\) branched over a hypersurface \(Y_{2 m} \subset \mathbb{P}^l\).
  \item A smooth well-formed Fano weighted complete intersection \(X_{(3, 10)} \subset \mathbb{P}(1^{(i_X + 6)}, 2, 5)\) is not amenable.
  \end{enumerate}
\end{example}

\begin{remark}\label{remark:WCI-amenable-O1-freeness}
  Let \(X\) be a smooth well-formed weighted complete intersection. Since the sheaf \(\mathcal{O}_X(1)\) is ample, \(X\) is amenable if and only if the associated map \(\Phi_{\vert \mathcal{O}_X(1) \vert} \colon X \dashrightarrow \vert \mathcal{O}_X(1) \vert\) is a finite morphism (see, for example,~\cite[Proposition~1.2.13]{lazarsfeld/positivity}). Moreover, if \(X\) is non-degenerate, we can identify \(\vert \mathcal{O}_X(1) \vert\) with \(\vert \mathcal{O}_{\mathbb{P}(\rho)}(1) \vert\) by Lemma~\ref{lemma:WCI-QS-WF-O1-dimension}. Then we can interpret the map \(\Phi_{\vert \mathcal{O}_X(1) \vert}\) as the projection to \(\vert \mathcal{O}_{\mathbb{P}(\rho)}(1) \vert \subset \mathbb{P}(\rho)\). 
\end{remark}

\begin{proposition}[Corollaries~\ref{corollary:WCI-fano-upper-index} and~\ref{corollary:WCI-fano-lower-index}]\label{proposition:WCI-fano-indexes}
  The following assertions hold for any \(\mathcal{F} \in \Theta\) and \(l, m \in \mathbb{Z}_{> 0}\).
  \begin{enumerate}
  \item We always have \(\mathcal{F}^{(l)}_{(0)} \in \Theta\).
  \item \(\mathcal{F}^{(0)}_{(m)} \in \Theta\) holds if and only if \(\mathcal{F}\) contains a strictly amenable weighted complete intersection.
  \end{enumerate}
\end{proposition}

We can split the set \(\Theta\) into disjoint union of \emph{series} and \emph{semi-series}.

\begin{definition}\label{definition:series-semiseries}
  Let \(\mathcal{F} \in \Theta\) be a family of weighted complete intersections such that \(\mathcal{F}\) cannot be presented as \(\mathcal{F} = \mathcal{G}^{(l)}_{(m)}\) for all families \(\mathcal{G} \in \Theta \setminus \{\mathcal{F}\}\), and \(l, m \in \mathbb{Z}_{\geqslant 0}\).
  \begin{itemize}
  \item Assume that \(\mathcal{F}\) contains a strictly amenable Fano weighted complete intersection. The \emph{series} generated by the family \(\mathcal{F}\) is the subset
  \(
  \mathfrak{S}(\mathcal{F}) = \{\mathcal{G} \in \Theta \mid \mathcal{G} =
  \mathcal{F}^{(l)}_{(m)}; \; l,m \in \mathbb{Z}_{\geqslant 0} \} \subset \Theta.
  \)
  \item Assume that \(\mathcal{F}\) does not contain a strictly amenable Fano weighted complete intersection. The \emph{semi-series} generated by the family \(\mathcal{F}\) is the subset
  \(
  \mathfrak{S}(\mathcal{F}) = \{\mathcal{G} \in \Theta \mid \mathcal{G} =
  \mathcal{F}^{(l)}_{(0)}; \; l \in \mathbb{Z}_{\geqslant 0} \} \subset \Theta.
  \)
  \end{itemize}
\end{definition}

\begin{proposition}[Lemma~\ref{lemma:WCI-fano-generator}, Corollary~\ref{corollary:WCI-fano-structure}]\label{proposition:WCI-fano-structure}
  The following assertions hold.
  \begin{enumerate}
  \item Let \(\mathcal{F}^{\rho}_{\mu} \in \Theta\) be a family of weighted complete intersections in \(\mathbb{P}(\rho)\) of multidegree \(\mu\). Then \(\mathcal{F}^{\rho}_{\mu}\) generates a series or a semi-series if and only the following conditions hold:
  \[
    \sum_{i = 0}^N a_i - \sum_{j = 1}^c d_j = 1, \quad
    \vert \{j \mid d_j > 2\} \vert = c; \quad
    \rho = (a_0, \ldots, a_N), \quad \mu = (d_1, \ldots, d_c).
  \]
  \item Series and semi-series form the partition of the set \(\Theta\).
  \end{enumerate}
\end{proposition}

\begin{remark}\label{remark:WCI-fano-index}
  Let \(\mathcal{F}^{\rho}_{\mu} \in \Theta\) be a family of weighted complete intersection of dimension at least 2. Then a general element \(X \in \mathcal{F}^{\rho}_{\mu}\) is a smooth Fano variety of index \(i_X = \sum_{i = 0}^N a_i - \sum_{j = 1}^c d_j\) (see Corollary~\ref{corollary:WCI-fano-index}).
\end{remark}

Now we are going to bound the number of series and semi-series in terms of variance. In order to do this, we have to bound the \emph{quadratic irregularity} of a smooth well-formed Fano weighted complete intersection.

\begin{definition}
  Let \(X \subset \mathbb{P}(\rho)\) be a smooth well-formed weighted complete intersection of multidegree \((d_1, \ldots, d_c)\) which is not an intersection with a linear cone. We refer to the number \(s_2(X) = \vert \{j \mid d_j > 2 \} \vert\) as the \emph{quadratic irregularity} of \(X\). 
\end{definition}

\begin{theorem}[see Corollary~\ref{corollary:WCI-fano-s2-bound}]\label{theorem:WCI-fano-s2-bound}
  Let \(X \subset \mathbb{P}(\rho)\) be a smooth well-formed Fano weighted complete intersection which is not an intersection with a linear cone. Assume that \(\var(X) > 0\).

  The following estimate holds: \(s_2(X) \leqslant 3 \var(X) - 2\).
\end{theorem}

\begin{conjecture}
  Let \(X \subset \mathbb{P}(\rho)\) be a smooth well-formed Fano weighted complete intersection which is not an intersection with a linear cone. The following estimate holds: \(s_2(X) \leqslant \var(X)\).
\end{conjecture}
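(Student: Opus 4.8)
The plan is to reduce the conjecture to a combinatorial inequality on the weights and degrees, then to the generators of the (semi)series, and finally to attack the weighted case by a transportation argument fed by the smoothness conditions.

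First I would rewrite $\var(X)$ explicitly. Using the index formula $i_X = \sum_i a_i - \sum_j d_j$ (this is exactly where Proposition~\ref{prop:uniqueness} enters) together with $\dim X = N - k$ and $\codim(X) = k$, one computes
\[
  \var(X) = \coindex(X) - \codim(X) = \sum_{i=0}^N (1 - a_i) + \sum_{j=1}^k (d_j - 2).
\]
Since $X$ is not an intersection with a linear cone, no $d_j$ equals a weight, so $d_j \ge 2$ for all $j$; hence $\sum_j (d_j - 2) - s_2(X) = \sum_{d_j \ge 4}(d_j - 3)$, because degrees $2$ and $3$ contribute nothing. Therefore the conjecture $s_2(X) \le \var(X)$ is \emph{equivalent} to
\[
  \sum_{i \,:\, a_i \ge 2}(a_i - 1) \ \le\ \sum_{j \,:\, d_j \ge 4}(d_j - 3). \qquad (\star)
\]

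Second, I would reduce $(\star)$ to the generators of (semi)series. The operation $\mathcal{X} \mapsto \mathcal{X}^l_m$ introduces only weights equal to $1$ and degrees equal to $2$, so it changes neither side of $(\star)$, nor $s_2$, nor (as recorded above) the variance. Thus $(\star)$ holds for $\mathcal{X}$ if and only if it holds for every $\mathcal{X}^l_m \in \Sigma$, and it suffices to prove it for the generators, for which $i_X = 1$ by Definition~\ref{def:series} and by the sporadic analogue. For a non-sporadic series generator one has in addition $s_2(X) = \codim(X)$, i.e.\ all $d_j \ge 3$, so $(\star)$ becomes $\sum_{a_i \ge 2}(a_i - 1) \le \sum_{j}(d_j - 3)$; in particular for the class $\mathfrak{S}_r^0$ (here $\mathbb{P} = \mathbb{P}^N$) the left side vanishes and $(\star)$ is immediate. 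All the difficulty is therefore concentrated in the genuinely weighted generators.

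Third, for the weighted case I would phrase $(\star)$ as the feasibility of a transportation problem. Form the bipartite graph with the weights $a_i \ge 2$ on one side and the degrees $d_j$ on the other, joining $a_i$ to $d_j$ when $a_i \mid d_j$; assign each $d_j$ the capacity $d_j - 3$ and each $a_i$ the demand $a_i - 1$. Every edge lands on a degree $d_j \ge 2a_i \ge 4$ (as $d_j \ne a_i$), so capacities along edges are positive. A feasible flow routing all demands gives $(\star)$ at once, and by max-flow--min-cut it exists precisely when the Hall-type inequalities
\[
  \sum_{a \in S}(a - 1)\ \le\ \sum_{d_j \in N(S)}(d_j - 3), \qquad N(S) = \{\, d_j : a \mid d_j \text{ for some } a \in S \,\},
\]
hold for every set $S$ of weights $> 1$. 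The smoothness input is that for each $v \ge 2$ the variety $X$ must avoid the singular stratum $\Pi_v = \{x_i = 0 : v \nmid a_i\} \cong \mathbb{P}(a_i : v \mid a_i)$; this requires at least $\dim \Pi_v + 1 = |\{i : v \mid a_i\}|$ of the equations to restrict nontrivially to $\Pi_v$, and each such restriction forces $v \mid d_j$. These give the single-stratum bounds $|\{j : v \mid d_j\}| \ge |\{i : v \mid a_i\}|$ and, more generally, the Hall inequalities for the subsets $S$ that arise geometrically from strata.

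The main obstacle is the passage from these stratum-by-stratum conditions to the \emph{full} family of Hall inequalities above. A naive injection ``weight $\mapsto$ distinct divisible degree'' is too much to ask: already for $X_6 \subset \mathbb{P}(1,1,2,3)$ the single degree $6$ must absorb both the weight $2$ and the weight $3$, and indeed $6 - 3 = (2-1)+(3-1)$, so one must track fractional/shared capacity and verify that no degree is oversubscribed. I expect this combinatorial bookkeeping to be the crux, and it is exactly what keeps the statement conjectural in general. In the partial cases — bounded variance $r$, small codimension, or weights that are pairwise coprime or prime powers — the generators form an explicit finite list by the finiteness results above, and there the Hall inequalities, equivalently $(\star)$, can be verified directly.
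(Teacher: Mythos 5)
You should begin by noting that the statement you were asked to prove is Conjecture~\ref{conj:main}: the paper itself does not prove it in general, only in partial cases ($\mathbb{P}=\mathbb{P}^N$ in Corollary~\ref{cor:main}; $\coindex(X)\leq 8$, or bounded variance with $\dim\vert\mathcal{O}_X(1)\vert\geq\dim(X)$, in Corollary~\ref{cor:simplicial-nef-small}). So there is no complete proof to measure your attempt against, and your proposal, which openly stops short of a proof, is at least honestly calibrated. Your reductions are correct and essentially coincide with the paper's: the identity $\var(X)=\sum_i(1-a_i)+\sum_j(d_j-2)$ follows from Corollary~\ref{cor:index} (which, incidentally, is the reference you want rather than Proposition~\ref{prop:uniqueness}); your inequality $(\star)$ is exactly the inequality $\sum_j(d_j-3)-\sum_{a_i>1}(a_i-1)\geq 0$ appearing inside the proof of Lemma~\ref{lem:simplicial-nef}; the invariance of both sides under $\mathcal{X}\mapsto\mathcal{X}^l_m$ reduces everything to the generators of (semi)series with $i_X=1$, which is precisely how the paper organizes the question (the conjecture holds if and only if $\mathfrak{S}''_r=\widetilde{\mathfrak{S}}''_r=\varnothing$); and your treatment of $\mathbb{P}=\mathbb{P}^N$ is Lemma~\ref{lem:main} verbatim.

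Where you diverge is the genuinely weighted case, and here the gap is real and is the one you name yourself. The paper's partial results assign each weight $a_i>1$ \emph{integrally} to a single degree via a WS-morphism $\varphi$ with $a_i\mid d_{\varphi(i)}$ satisfying $d_j>\sum_{i\in\varphi^{-1}(j)}a_i$ (Lemma~\ref{lem:simplicial-nef}); such morphisms are produced only for $a_N<15$ (Lemmas~\ref{lem:pre-minimal-existence} and~\ref{lem:pre-minimal}) and are conjectured to exist in general (Conjecture~\ref{conj:strong}). Your transportation relaxation with demands $a_i-1$ and capacities $d_j-3$ is a genuinely different sufficient condition --- more flexible because flow may be split --- but it is still strictly stronger than $(\star)$, and it provably does not follow from regularity alone: for the regular pair $(\overline{d},\overline{a})=(2,3,5,30;\,6,10,15)$ of Example~\ref{ex:bad}, the Hall inequality for $S=\{6,10,15\}$ reads $28\leq 27$ and fails, while $(\star)$ itself holds ($28\leq 29$). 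Hence the single-stratum counts $\vert\{j:v\mid d_j\}\vert\geq\vert\{i:v\mid a_i\}\vert$ that you extract from the singular strata cannot yield your Hall system; you would have to exploit the full quasismoothness combinatorics of Proposition~\ref{prop:qs-criterion}, which is exactly where the paper's Conjecture~\ref{conj:strong} is also stuck. As a proof the attempt is therefore incomplete; as a reformulation and a programme it is sound, and it is parallel to, though not identical with, the paper's weighted-simplicial-complex approach.
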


\begin{remark}
  The conjecture holds for \(\var(X) < 5\) (see Appendix~\ref{section:small-variance}).
\end{remark}

Let us formulate the main result of the paper.

\begin{notation}
  For any \(r \in \mathbb{Z}_{\geqslant 0}\) let us denote by \(\mathbb{S}_r\) the set of all series and semi-series of variance \(r\). We can split it into the following three disjoint subsets \(\mathbb{S}_r = \mathbb{S}^{\alpha}_r \bigsqcup \mathbb{S}^{\beta}_r \bigsqcup \mathbb{S}^{\gamma}_r\):
  \begin{itemize}
  \item \(\mathbb{S}^{\alpha}_r\) consists of series of families of complete intersections in \(\mathbb{P}(\rho) = \mathbb{P}^{\dim(\mathbb{P}(\rho))}\);
  \item \(\mathbb{S}^{\beta}_r\) consists of series of families of weighted complete intersections in \(\mathbb{P}(\rho) \neq \mathbb{P}^{\dim(\mathbb{P}(\rho))}\).
  \item \(\mathbb{S}^{\gamma}_r\) consists of semi-series of families of weighted complete intersections in \(\mathbb{P}(\rho) \neq \mathbb{P}^{\dim(\mathbb{P}(\rho))}\).
  \end{itemize}

  In other words, the following identity holds:
  \[
    \Theta = \bigsqcup_{r = 0}^{\infty}
    \left (
      \bigsqcup_{\mathfrak{S} \in \mathbb{S}_r} \mathfrak{S}
    \right ) = \bigsqcup_{r = 0}^{\infty}
    \left (
      \bigsqcup_{\mathfrak{S}_{\alpha} \in \mathbb{S}^{\alpha}_r} \mathfrak{S}_{\alpha}
      \sqcup
      \bigsqcup_{\mathfrak{S}_{\beta} \in \mathbb{S}^{\beta}_r} \mathfrak{S}_{\beta}
      \sqcup
      \bigsqcup_{\mathfrak{S}_{\gamma} \in \mathbb{S}^{\gamma}_r} \mathfrak{S}_{\gamma}
    \right ).
  \]
\end{notation}

\begin{theorem}[Lemma~\ref{lemma:CI-fano-series-cardinality}, Corollary~\ref{corollary:WCI-fano-series-semiseries-bound}]\label{theorem:WCI-fano-series-semiseries-bound}
  The following assertions hold.
  \begin{enumerate}
  \item \(\vert \mathbb{S}^{\alpha}_r \vert\) is equal to the number of integer partitions of \(r\) for any \(r \in \mathbb{Z}_{\geqslant 0}\).
  \item Put \(F_w(m) = \binom{m + w - 1}{m}\). The following estimates hold:
    \[
      \vert \mathbb{S}^{\beta}_r \vert \leqslant \sum_{c = 1}^{3 r - 2} F_r(c - 1) F_r(c), \quad
      \vert \mathbb{S}^{\gamma}_r \vert \leqslant
      \sum_{c = 1}^{3 r - 2} F_{r + 2 c}(r + c) F_c((r - 1) (r + 2 c)).
    \]
  \end{enumerate}
\end{theorem}

\begin{remark}
  The estimates provided by Theorem~\ref{theorem:WCI-fano-series-semiseries-bound} are far from being strict (cf. Appendix~\ref{section:small-variance}).
\end{remark}

Smooth well-formed Fano weighted complete intersections play an important role in the study of birational rigidity of Fano varieties (see~\cites{cheltsov/rigid,pukhlikov/rigid}). It is well-known that a smooth Fano variety \(X\) of dimension at least 3 such that \((-K_X)^{\dim(X)} \leqslant 4\), and \(\Bs(\vert -K_X \vert) = \varnothing\), is birationally superrigid (see~\cite[\textsection0.3]{cheltsov/rigid}). At the same time, there are few Fano varieties satisfying these two conditions (for example, double spaces and double quadrics, see~\cite[Example~2.1.3]{pukhlikov/rigid}). We prove that the linear system \(\vert -K_X \vert\) on a smooth well-formed Fano weighted complete intersection \(X\) of anticanonical degree one is \emph{never} base-point free.

\begin{theorem}[Proposition~\ref{proposition:WCI-fano-degree-one}]\label{theorem:WCI-fano-degree-one}
  Let \(X \subset \mathbb{P}(\rho)\) be a smooth well-formed Fano weighted complete intersection of multidegree \((d_1, \ldots, d_c)\) which is not an intersection with a linear cone. Assume that \((-K_X)^{\dim(X)} = 1\).
  \begin{enumerate}
  \item The following estimate holds: \(\dim(\vert \mathcal{O}_X(1) \vert) < \dim(X)\), hence \(\Bs(\vert \mathcal{O}_X(1) \vert) \neq \varnothing\) (see Remark~\ref{remark:WCI-amenable-O1-freeness}).
  \item The degree \(d_j\) is not a prime power for any \(j = 1, \ldots, c\).
  \end{enumerate}
\end{theorem}

\begin{corollary}[Corollary~\ref{corollary:WCI-fano-degree-one-finite}]
  The number of families \(\mathcal{F} \in \Theta\) such that there exists a smooth well-formed Fano weighted complete intersection \(X \in \mathcal{F}\) of anticanonical degree one and given variance is finite. 
\end{corollary}

\begin{structure}
  In Section~\ref{section:preliminaries} we provide various preliminary results on weighted complete intersections. In Section~\ref{definition:series-semiseries} we prove main results of the paper:
\begin{itemize}
\item in Subsection~\ref{subsection:structure} we explicitly construct the partition of the set \(\Theta\) into series and semi-series of smooth well-formed Fano weighted complete intersections (Propositions~\ref{proposition:WCI-fano-indexes} and~\ref{proposition:WCI-fano-structure});
\item in Subsection~\ref{subsection:s2-bounds} we bound the quadratic irregularity of a smooth well-formed Fano weighted complete intersection in terms of variance (Theorem~\ref{theorem:WCI-fano-s2-bound});
\item in Subsection~\ref{subsection:series-semiseries-bounds} we bound the number of series and semi-series of smooth well-formed Fano weighted complete intersection in terms of variance (Theorem~\ref{theorem:WCI-fano-series-semiseries-bound}).
\end{itemize}
In Section~\ref{section:degree-one} we prove that the anticanonical linear system on a smooth well-formed Fano weighted complete intersection of anticanonical degree one is never base-point free (Theorem~\ref{theorem:WCI-fano-degree-one}). Appendix~\ref{section:regular-sequences} contains statements from commutative algebra which are used in Subsections~\ref{subsection:WCI-basics} and~\ref{subsection:structure}. Appendix~\ref{section:small-variance} provides the list of series and semi-series of smooth well-formed Fano weighted complete intersections of variance up to 4.
\end{structure}

\begin{conventions}
  We work over an algebraically closed field \(\Bbbk\) of characteristic zero. Throughout the paper \(a^{(t)}\) stands for a number \(a \in \mathbb{Z}_{> 0}\) repeated \(t\) times. We assume that the weights \((a_0, \ldots, a_N)\) and degrees \((d_1, \ldots, d_c)\) of a weighted complete intersection are non-decreasing: \(a_0 \leqslant \ldots \leqslant a_N\), and \(d_1 \leqslant \ldots \leqslant d_c\).
\end{conventions}

\begin{acknowledgements}
  This work was supported by the Russian Science Foundation under grant no.~19--11--00164, \url{https://rscf.ru/en/project/19-11-00164/}. The author is grateful to V.~Przyjalkowski for stating the problem, useful discussions, and careful reading of the paper, and C.~Shramov for helpful comments and suggestions. We also want to thank the referees for their useful remarks.
\end{acknowledgements}

\section{Preliminaries}\label{section:preliminaries}

In this section we review basic facts about weighted complete intersections and their combinatorics.

\subsection{Weighted complete intersections}\label{subsection:WCI-basics}

\begin{notation}\label{notation:polynomial-ring}
  Let \(\Bbbk\) be a field, and \(\rho = (a_0, \ldots, a_N)\) be a tuple of positive integers. We denote by \(R^{\rho} = \Bbbk[X_0, \ldots, X_N]\) the polynomial ring over \(\Bbbk\) with the following structure of a graded ring:
  \[
    \deg \left ( \prod_{i = 0}^N X_i^{\alpha_i} \right ) = \sum_{i = 0}^N a_i \alpha_i, \quad
    (\alpha_0, \ldots, \alpha_N) \in \mathbb{Z}_{\geqslant 0}^{N + 1}, \quad
    R^{\rho} = \bigoplus_{n = 0}^{\infty} R^{\rho}_n.
  \]
\end{notation}

\begin{definition}
  Let \(\Bbbk\) be a field, and \(\rho = (a_0, \ldots, a_N)\) be a tuple of positive integers. We refer to \(\mathbb{P}(\rho) = \Proj(R^{\rho})\) as the \emph{weighted projective space over \(\Bbbk\) with weights \(\rho\)}.
\end{definition}

\begin{definition}[{\cite[Definition~5.11]{ianofletcher/weighted}}]
  A weighted projective space \(\mathbb{P}(a_0, \ldots, a_N)\) is said to be \emph{well-formed} if
  \(
  \gcd(a_0, \ldots, a_{i - 1}, \widehat{a_i}, a_{i + 1}, \ldots, a_N) = 1
  \)
  for an \(i = 0, \ldots, N\).
\end{definition}

\begin{proposition}[{\cite[\nopp 1.3.1]{dolgachev/weighted}}]
  Any weighted projective space is isomorphic to a well-formed one.
\end{proposition}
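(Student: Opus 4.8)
The plan is to produce an explicit chain of isomorphisms that successively strips redundant common factors from the weights. First I would record two elementary isomorphisms of weighted projective spaces. The \emph{scaling isomorphism}: for any $q \in \mathbb{Z}_{>0}$ one has $\mathbb{P}(q a_0, \dots, q a_N) \cong \mathbb{P}(a_0, \dots, a_N)$, since every monomial then has degree divisible by $q$, so passing to the $q$-th Veronese subring (which does not change $\mathrm{Proj}$) simply divides all degrees by $q$. Applying this with $q = \gcd(a_0, \dots, a_N)$ reduces the problem to the case $\gcd(a_0, \dots, a_N) = 1$, which I assume from now on (the case $N = 0$ being trivial).

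Next, for each index $i$ set $d_i = \gcd(a_0, \dots, \widehat{a_i}, \dots, a_N)$, the greatest common divisor of all weights except $a_i$; well-formedness is exactly the statement that every $d_i$ equals $1$. Two easy observations drive the argument. First, $\gcd(a_i, d_i) = 1$: a prime dividing both would divide every weight, contradicting $\gcd(a_0, \dots, a_N) = 1$. Second, the $d_i$ are pairwise coprime: if a prime $p$ divided $d_i$ and $d_j$ with $i \neq j$, then $p$ would divide $a_k$ for every $k \neq i$ and for every $k \neq j$, hence for every $k$, again a contradiction. Since $d_i \mid a_k$ for all $k \neq i$ and the $d_i$ are pairwise coprime, the product $\prod_{i \neq k} d_i$ divides $a_k$, and I set $b_k = a_k / \prod_{i \neq k} d_i$.

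The technical core is the \emph{reduction isomorphism}: if $d \mid a_k$ for all $k \neq i$ and $\gcd(a_i, d) = 1$, then $\mathbb{P}(a_0, \dots, a_N) \cong \mathbb{P}(a_0', \dots, a_N')$, where $a_i' = a_i$ and $a_k' = a_k / d$ for $k \neq i$. I would prove this through the quotient description $\mathbb{P}(a_0, \dots, a_N) = (\mathbb{A}^{N+1} \setminus \{0\}) / \mathbb{G}_m$, with $t$ acting by $t \cdot (x_k) = (t^{a_k} x_k)$. The subgroup $\mu_d \subset \mathbb{G}_m$ of $d$-th roots of unity fixes every coordinate $x_k$ with $k \neq i$ (because $d \mid a_k$) and acts on $x_i$ through the character $\zeta \mapsto \zeta^{a_i}$, which is an automorphism of $\mu_d$ precisely because $\gcd(a_i, d) = 1$; hence the $\mu_d$-quotient amounts to replacing $x_i$ by $x_i^d$. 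Identifying the residual group $\mathbb{G}_m / \mu_d \cong \mathbb{G}_m$ via $s = t^d$, one checks that $s$ acts on the new coordinates with weights $a_i' = a_i$ and $a_k' = a_k / d$ for $k \neq i$, which yields the claimed isomorphism. I would then apply this lemma once for each $i$ with $d = d_i$, processing the indices in turn: the pairwise coprimality of the $d_i$ guarantees that at each step the current weights still satisfy the two hypotheses (divisibility by the next $d_i$, and coprimality of the surviving weight with it), so the net effect is to replace $a_k$ by $b_k$, giving $\mathbb{P}(a_0, \dots, a_N) \cong \mathbb{P}(b_0, \dots, b_N)$.

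Finally I would verify that $\mathbb{P}(b_0, \dots, b_N)$ is well formed. Fix $k$ and suppose a prime $p$ divides $b_j$ for all $j \neq k$. Writing $\prod_{i \neq j} d_i = d_k \cdot \prod_{i \neq j,\, i \neq k} d_i$, we obtain $a_j / d_k = b_j \cdot \prod_{i \neq j,\, i \neq k} d_i$, so $p \mid a_j / d_k$ for all $j \neq k$; but $\gcd_{j \neq k}(a_j / d_k) = d_k / d_k = 1$, since $d_k = \gcd_{j \neq k}(a_j)$, a contradiction. Thus for every $k$ the gcd of the $b_j$ with $j \neq k$ is $1$, i.e. the space is well formed. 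The one genuinely nontrivial point is the reduction isomorphism of the third paragraph: all the arithmetic is elementary, but checking that the $\mu_d$-quotient together with the torus reparametrization really induces an isomorphism of weighted projective spaces is the step carrying the geometric content, and is exactly where the hypothesis $\gcd(a_i, d) = 1$ is essential.
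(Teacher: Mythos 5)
The paper gives no proof of this proposition---it is quoted directly from \cite[1.3.1]{Dol82}---but your argument is correct and is essentially the standard one from that reference: first divide out $\gcd(a_0,\dots,a_N)$ by the scaling isomorphism, then iterate the reduction isomorphism over the pairwise coprime numbers $d_i = \gcd(a_0,\dots,\widehat{a_i},\dots,a_N)$. Your geometric derivation of the reduction isomorphism via the $\mu_d$-quotient is a faithful translation of Dolgachev's algebraic one (the Veronese subring $S^{(d)}$ equals the polynomial ring on $x_0,\dots,x_i^d,\dots,x_N$ precisely because $\gcd(a_i,d)=1$), and both the verification that the hypotheses persist through the iteration and the final well-formedness check of the weights $b_k = a_k/\prod_{i\neq k} d_i$ are carried out correctly.
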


The singular locus of a weighed projective space can be explicitly described.

\begin{lemma}[{\cite[\nopp 5.15]{ianofletcher/weighted}}]\label{lemma:WPS-WF-singular-locus}
  Let \(\mathbb{P}(\rho) = \Proj(R^{\rho})\) be a well-formed weighted projective space with weights \(\rho = (a_0, \ldots, a_N)\). Then \(\Sing(\mathbb{P}(\rho))\) is the union of strata
  \[
    \Sing(\mathbb{P}(\rho)) = \bigcup_I \{X_i = 0 \mid i \not \in I\}, \quad
    R^{\rho} = \Bbbk[X_0 ,\ldots, X_N],
  \]
  over all subsets \(I\) of \(\{0, \ldots, N\}\) such that \(\gcd(\{a_i \mid i \in I\}) > 1\).
\end{lemma}

\begin{lemma}[{\cite[Theorem~1.4.1]{dolgachev/weighted}}]\label{lemma:WPS-WF-On-sheaves}
  Let \(\mathbb{P}(\rho) = \Proj(R^{\rho})\) be a well-formed weighted projective space. Then we can identify \(H^0(\mathbb{P}(\rho), \mathcal{O}_{\mathbb{P}(\rho)}(n)) \simeq R^{\rho}_n\) for any \(n \in \mathbb{Z}_{\geqslant 0}\).
\end{lemma}

\begin{corollary}\label{corollary:WPS-WF-singular-locus-O1-base-locus}
  Let \(\mathbb{P}(\rho)\) be a well-formed weighted projective space. Then we have \(\Sing(\mathbb{P}(\rho)) \subset \Bs(\vert \mathcal{O}_{\mathbb{P}(\rho)}(1) \vert)\).
\end{corollary}

Let us remind the correspondence between homogeneous ideals in \(R^{\rho}\) and closed subschemes in \(\mathbb{P}(\rho)\).

\begin{definition}[see~{\cite[Proposition~3.1.2]{dolgachev/weighted}}]\label{definition:defining-ideal}
  Let \(X \subset \mathbb{P}(\rho) = \Proj(R^{\rho})\) be a closed subscheme, and
  \[
    S_X \colon R^{\rho} \rightarrow \bigoplus_{n = 0}^{\infty} H^0(X, \mathcal{O}_X(n))
  \]
  be the corresponding Serre homomorphism. We refer to the ideal \(I_X = \ker(S_X)\) as the \emph{defining ideal} of \(X\).
\end{definition}

\begin{definition}\label{definition:saturation}
  Let \(I \subset R^{\rho}\) be a homogeneous ideal. We define its \emph{saturation} \(I^{\infty}\) as the defining ideal \(I_X\) of the closed subscheme
  \(
  X = \Proj(R^{\rho} / I) \subset \Proj(R^{\rho}) = \mathbb{P}({\rho})
  \). 
\end{definition}

\begin{lemma}\label{lemma:saturation-formula}
  Let \(I \subset R^{\rho}\) be a homogeneous ideal. Its saturation \(I^{\infty}\) can be explicitly described as
  \[
    I^{\infty} = \langle \{x \in R^{\rho} \mid (R^{\rho} \cdot x)_{n \cdot \lcm(\rho)} \subset
    I \text{ for all } n \gg 0\} \rangle.
  \]
\end{lemma}

\begin{proof}
  Actually, let us consider the truncation \(\widetilde{R^{\rho}} = \oplus_{n = 0}^{\infty} R^{\rho}_{n \cdot \lcm(\rho)}\) of the graded ring \(R^{\rho} = \oplus_{n = 0}^{\infty} R^{\rho}_n\). It is a graded ring which is finitely generated in degree one (after the obvious change of the grading).

  We have the canonical isomorphism \(\Proj(R^{\rho}) = \Proj(\widetilde{R^{\rho}})\) (see~\cite[2.4.7(i)]{grothendieck/elements}). Put \(\widetilde{I} = I \cap \widetilde{R^{\rho}}\), and \(X = \Proj(R^{\rho} / I) = \Proj(\widetilde{R^{\rho}} / \widetilde{I})\). By~\cite[Exercise~II.5.10]{hartshorne/geometry} the closed subscheme \(X \subset \Proj(\widetilde{R^{\rho}})\) canonically corresponds to the ideal \((\widetilde{I} : \widetilde{B}^{\infty}) \subset \widetilde{R^{\rho}}\), where \(\widetilde{B} \subset \widetilde{R^{\rho}}\) is the irrelevant ideal. More precisely, we have
  \[
    (\widetilde{I} : \widetilde{B}^{\infty}) = \ker
    \left (
      \widetilde{S}_X \colon \widetilde{R^{\rho}} \rightarrow
      \bigoplus_{n = 0}^{\infty} H^0(X, \mathcal{O}_X(n))
    \right ).
  \]
  Then we can recover \(I^{\infty}\) from the relation \(\im(S_X) = \im(\widetilde{S}_X)\), i.e., \(R^{\rho} / I^{\infty} \simeq \widetilde{R^{\rho}} / (\widetilde{I} : \widetilde{B}^{\infty})\).
\end{proof}

\begin{notation}\label{notation:polynomial-sequences}
  Let \(\rho\) and \(\mu = (d_1, \ldots, d_m)\) be tuples of positive integers. We denote by \(R^{\rho}_{\mu} = \prod_{j = 1}^m R^{\rho}_{d_j}\) the corresponding space of sequences of weighted homogeneous polynomials. We also denote by  \(\langle F \rangle = \langle F_1, \ldots, F_m \rangle\) the homogeneous ideal in \(R^{\rho}\) generated by a sequence \(F = (F_1, \ldots, F_m) \in R^{\rho}_{\mu}\).
\end{notation}

\begin{definition}\label{definition:WCI}
  We refer to a closed subscheme \(X \subset \mathbb{P}(\rho)\) as a \emph{weighted complete intersection} of multidegree \(\mu = (d_1, \ldots, d_c) \in \mathbb{Z}_{> 0}^c\) if its defining ideal \(I_X \subset R^{\rho}\) is generated by a regular sequence \(F \in R^{\rho}_{\mu}\).
\end{definition}

The saturatedness assumption in Definition~\ref{definition:WCI} cannot be omitted, but it often holds automatically.

\begin{example}[{\cite{przyjalkowski/weighted}}]
  Put \(\rho = (1, 1, 3, 6)\), and let \(I = \langle x_0^3, x_1^3 \rangle \subset R^{\rho}\) be a homogeneous ideal. Its saturation \(I^{\infty} = \langle x_0^3, x_1^3, x_0^2 x_1^2 \rangle\) cannot be generated by a regular sequence.
\end{example}

\begin{example}
  Let \(I \subset R^{\rho}\) be a homogeneous ideal generated by a regular sequence. Assume that \(\rho = (1^{(N + 1)})\), and \(\height(I) \leqslant N\). We claim that \(I\) is saturated in the sense of Definition~\ref{definition:saturation}. In our case the saturation \(I^{\infty}\) coincides with the ideal \((I : B^{\infty})\), where \(B \subset R^{\rho}\) is the irrelevant ideal (see Lemma~\ref{lemma:saturation-formula}). By~\cite[Corollary~18.14]{eisenbud/algebra} any associated prime \(\mathfrak{P} \in \Ass(I)\) is minimal over \(I\) and has height \(\height(\mathfrak{P}) = \height(I)\). Let \(I = \cap_{r = 1}^s \mathfrak{P}_r\) be the minimal primary decomposition. If \(I\) is not saturated, then there exists an element \(f \in (\mathfrak{P}_r : B^{\infty}) \setminus \mathfrak{P}_r\) for some \(r = 1, \ldots, s\) such that \(f \cdot B^m \subset \mathfrak{P}_r\) for some \(m \in \mathbb{Z}_{> 0}\). But \(f \not \in \mathfrak{P}_r\) by assumption, hence we have \(B \subset \rad(\mathfrak{P}_r)\). We obtain a contradiction:
  \(
    N + 1 = \height(B) \leqslant \height(\rad(\mathfrak{P}_r)) =
    \height(\mathfrak{P}_r) = \height(I)
  \).
\end{example}

\begin{example}\label{example:radical-ideal-saturated}
  A radical homogeneous ideal \(I \subset R^{\rho}\) is saturated (see Lemma~\ref{lemma:saturation-formula}).
\end{example}

Weighted complete intersections in \(\mathbb{P}(\rho)\) of given multidegree \(\mu\) form a family in the following sense.

\begin{definition}\label{definition:WCI-family}
  Let \(\mathbb{P}(\rho) = \Proj(R^{\rho})\) be a well-formed weighted projective space, and \(\mu = (d_1, \ldots, d_c) \in \mathbb{Z}_{> 0}^c\) be any tuple. We denote by \(\Gamma^{\rho}_{\mu} \subset \mathbb{P}(\rho) \times \prod_{j = 1}^c \vert \mathcal{O}_{\mathbb{P}(\rho)}(d_j) \vert\) the corresponding incidence variety. Put
  \begin{gather*}
    \Phi^{\rho}_{\mu} \colon R^{\rho}_{\mu} \xrightarrow{\sim}
    \prod_{j = 1}^c H^0(\mathbb{P}(\rho),
    \mathcal{O}_{\mathbb{P}(\rho)}(d_j)) \twoheadrightarrow
    \prod_{j = 1}^c \vert \mathcal{O}_{\mathbb{P}(\rho)}(d_j) \vert, \\
    U^{\rho}_{\mu} = \{F \in R^{\rho}_{\mu} \mid
    F \text{ is a regular sequence}\}, \;
    V^{\rho}_{\mu} = \{F \in R^{\rho}_{\mu} \mid
    \langle F \rangle \text{ is saturated}\}.
  \end{gather*}
  Let \(\pi^{\rho}_{\mu} \colon \Gamma^{\rho}_{\mu} \rightarrow \prod_{j = 1}^c \vert \mathcal{O}_{\mathbb{P}(\rho)}(d_j) \vert\) be the natural projection, and \(\mathcal{F}^{\rho}_{\mu} \colon T \rightarrow B\) be its corestriction to the subspace \(B = \Phi^{\rho}_{\mu}(U^{\rho}_{\mu} \cap V^{\rho}_{\mu})\). We refer to \(\mathcal{F}^{\rho}_{\mu}\) as the \emph{family of weighted complete intersections in \(\mathbb{P}(\rho)\) of multidegree \(\mu\)}.
\end{definition}

\begin{remark}\label{remark:regular-sequences-generic}
  The subset \(U^{\rho}_{\mu} \subset R^{\rho}_{\mu}\) is Zariski-open by Corollary~\ref{corollary:regular-sequences-generic}. 
\end{remark}

If the number of weights equal to 1 of a weighted projective space \(\mathbb{P}(\rho)\) is sufficiently large, then a family of weighted complete intersections in \(\mathbb{P}(\rho)\) is non-empty, and its general member is reduced.

\begin{lemma}[cf. Lemma~\ref{lemma:WCI-QS-WF-O1-dimension} and Proposition~\ref{proposition:WCI-fano-CY-O1-dim-bound}]\label{lemma:WCI-family-nonempty}
  Let \(\rho = (a_0, \ldots, a_N)\) and \(\mu = (d_1, \ldots, d_c)\) be tuples of positive integers. Assume that \(\vert \{i \mid a_i = 1\} \vert > c\). Then the corresponding family \(\mathcal{F}^{\rho}_{\mu} \colon T \rightarrow B\) of weighted complete intersections is non-empty, the defining ideal \(I_X \subset R^{\rho}\) of its general member \(X \in \mathcal{F}^{\rho}_{\mu}\) is radical, and the base \(B\) is an irreducible topological space.
\end{lemma}

\begin{proof}
  Here we use the notation of Definition~\ref{definition:WCI-family}. Let \((i_1, \ldots, i_c) \in \{0, \ldots, N\}\) be any non-repeating numbers such that \(a_{i_j} = 1\) for any \(j = 1, \ldots, c\). Then there exists a regular sequence \((X_{i_1}^{d_1}, \ldots, X_{i_c}^{d_c}) \in R^{\rho}_{\mu}\), where \(R^{\rho} = \Bbbk[X_0, \ldots, X_N]\). Consequently, the subset \(U^{\rho}_{\mu} \subset R^{\rho}_{\mu}\) of regular sequences is non-empty and open by Corollary~\ref{corollary:regular-sequences-generic}. Moreover,~\cite[Theorem~18.15]{eisenbud/algebra} implies that for any sequence \(F = (f_1, \ldots, f_c) \in U^{\rho}_{\mu}\) the corresponding ideal \(\langle F \rangle \subset R^{\rho}_{\mu}\) is radical if and only if \(F\) is contained in \(U^{\rho}_{\mu} \cap S^{\rho}_{\mu}\), where
  \[
    S^{\rho}_{\mu} = \{F \in R^{\rho}_{\mu} \mid \height(\langle F; \partial f_j/\partial x_i \rangle) > c\}.
  \]
  Note that by assumption \(S^{\rho}_{\mu}\) is not empty: for example, it contains a general element of \(R^{\widetilde{\rho}}_{\mu} \subset R^{\rho}_{\mu}\), where \(\widetilde{\rho} = (a_i \mid a_i = 1)\). Moreover, we claim that the subset \(S^{\rho}_{\mu} \subset R^{\rho}_{\mu}\) is Zariski-open. Let us consider the inclusion
  \[
    i \colon R^{\rho}_{\mu} \hookrightarrow R^{\rho}_{\mu} \times \prod_{i,j} R^{\rho}_{d_j - a_i}, \quad
    (f_1, \ldots, f_c) \mapsto (f_1, \ldots, f_c; \partial f_j / \partial x_i).
  \]
  The following subset is Zariski-open by Corollary~\ref{corollary:height-lower-bound-generic}:
  \[
    \widetilde{S}^{\rho}_{\mu} =
    \left \{
      G \in R^{\rho}_{\mu} \times \prod_{i,j} R^{\rho}_{d_j - a_i} \mid \height(\langle G \rangle) > c
    \right \}.
  \]
  Then the subset \(S^{\rho}_{\mu} = i(R^{\rho}_{\mu}) \cap \widetilde{S}^{\rho}_{\mu}\) is also Zariski-open. But any radical ideal is saturated by Example~\ref{example:radical-ideal-saturated}. Consequently, \(V^{\rho}_{\mu}\) contains a non-empty open subset, so the base \(B\) is irreducible.
\end{proof}

Let us recall basic geometrical properties of weighted complete intersections.

\begin{definition}[{\cite[Definition~6.3]{ianofletcher/weighted}}]\label{definition:WPV-quasismooth}
  A closed subscheme \(X \subset \mathbb{P}(\rho)\) is said to be \emph{quasi-smooth} if its affine cone \(\Spec(R^{\rho} / I_X)\) is smooth outside the origin, where \(I_X \subset R^{\rho}\) is the defining ideal (see Definition~\ref{definition:defining-ideal}).
\end{definition}

We have an analogue of Lemma~\ref{lemma:WCI-family-nonempty} for quasi-smooth weighted complete intersections.

\begin{lemma}\label{lemma:WCI-family-irreducible-base}
  Let \(\rho\) and \(\mu\) be tuples of positive integers. Assume that the corresponding family \(\mathcal{F}^{\rho}_{\mu} \colon T \rightarrow B\) of weighted complete intersections contains a quasi-smooth element. Then a general element \(X \in \mathcal{F}^{\rho}_{\mu}\) is quasi-smooth, its defining ideal \(I_X \subset R^{\rho}\) is radical, and the base \(B\) is an irreducible topological space.
\end{lemma}

\begin{proof}
  Here we use the notation of Definition~\ref{definition:WCI-family}. Let us consider the Zariski-open subset
  \[
    W^{\rho}_{\mu} = \{F \in R^{\rho}_{\mu} \mid \Spec(R^{\rho} / \langle F \rangle)
    \subset \mathbb{A}^{N + 1} \text{ is smooth outside the origin}\}.
  \]
  Recall that \(U^{\rho}_{\mu} \subset R^{\rho}_{\mu}\) is Zariski-open by Remark~\ref{remark:regular-sequences-generic}. By assumption \(U^{\rho}_{\mu} \cap W^{\rho}_{\mu}\) is non-empty. For any sequence \(F \in U^{\rho}_{\mu} \cap W^{\rho}_{\mu}\) the corresponding ideal \(\langle F \rangle \subset R^{\rho}_{\mu}\) is radical by~\cite[Exercise~18.9]{eisenbud/algebra}, hence is saturated by Example~\ref{example:radical-ideal-saturated}. Consequently, \(V^{\rho}_{\mu}\) contains a non-empty open subset, so the base \(B\) is irreducible.
\end{proof}

\begin{definition}[cf.~{\cite[Definition~1.1]{dimca/singularities}}]\label{definition:WF-subscheme}
  A closed subscheme \(X \subset \mathbb{P}(\rho)\) is said to be \emph{well-formed} if \(\mathbb{P}(\rho)\) is well-formed, and \(\codim_X (X \cap \Sing(\mathbb{P}(\rho))) \geqslant 2\).
\end{definition}

\begin{proposition}[{\cite[Proposition~8]{dolgachev/weighted}}]
  Let \(X \subset \mathbb{P}(\rho)\) be a quasi-smooth well-formed weighted complete intersection. Then the singular locus of \(X\) is the intersection of \(X\) with the singular locus of \(\mathbb{P}(\rho)\).
\end{proposition}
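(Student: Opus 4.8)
The plan is to use the quotient presentation of $\mathbb{P}$ and reduce everything to a local analysis of cyclic quotient singularities. Write $U = \mathbb{A}^{N+1}\setminus\{0\}$ and let $p\colon U\to\mathbb{P}$ be the projection, which realizes $\mathbb{P}$ as the geometric quotient of $U$ by the $\mathbb{G}_m$-action $t\cdot(x_0,\dots,x_N) = (t^{a_0}x_0,\dots,t^{a_N}x_N)$. The stabilizer of a point $(x_i)\in U$ is the group $\mu_d$ of $d$-th roots of unity with $d = \gcd\{a_i : x_i\neq 0\}$; by Lemma~\ref{lem:singular-strata} this is trivial exactly when $p(x_i)\notin\sing\mathbb{P}$. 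Hence $p$ restricts to a principal $\mathbb{G}_m$-bundle over $V := \mathbb{P}\setminus\sing\mathbb{P}$, and quasismoothness is precisely the statement that $\widetilde{X} := p^{-1}(X)$ is smooth.

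First I would establish the inclusion $\sing(X)\subseteq X\cap\sing\mathbb{P}$. Over $V$ the restriction $\widetilde{X}\cap p^{-1}(V)\to X\cap V$ is a principal $\mathbb{G}_m$-bundle whose total space is smooth; since such a projection is faithfully flat and smoothness descends along it (étale-locally the bundle is a product), the base $X\setminus\sing\mathbb{P}$ is smooth. Therefore every singular point of $X$ lies in $X\cap\sing\mathbb{P}$.

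The converse inclusion $X\cap\sing\mathbb{P}\subseteq\sing(X)$ is the substantial part, and this is where well-formedness of $X$ enters. Fix $P\in X\cap\sing\mathbb{P}$, choose a lift $\widetilde{P}$, set $I = \{i : \widetilde{P}_i\neq 0\}$ and $d = \gcd\{a_i : i\in I\}>1$, so that $\mathrm{Stab}(\widetilde{P}) = \mu_d$. Choosing a $\mu_d$-invariant Luna slice $S$ transverse to the orbit through $\widetilde{P}$, one identifies $\mathbb{P}$ near $P$ with $S/\mu_d$ and $X$ near $P$ with $\widetilde{X}_S/\mu_d$, where $\widetilde{X}_S := \widetilde{X}\cap S$ is smooth (a transverse slice of the smooth $\mathbb{G}_m$-variety $\widetilde{X}$) and $\mu_d$-invariant, with $\dim\widetilde{X}_S = \dim X$. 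After linearizing the $\mu_d$-action at $\widetilde{P}$, the germ $(X,P)$ becomes the quotient germ $(T/\mu_d,0)$ with $T = T_{\widetilde{P}}\widetilde{X}_S$. The key observation is that for any $g\neq 1$ in $\mu_d$ the fixed locus $(\widetilde{X}_S)^g$ maps into $X\cap\sing\mathbb{P}$, since its points acquire the nontrivial stabilizer $\langle g\rangle$; well-formedness then forces $\codim_{\widetilde{X}_S}(\widetilde{X}_S)^g\geq 2$, whence $\codim_T T^g\geq 2$, so no nontrivial element of $\mu_d$ acts as a pseudoreflection on $T$. As the $\mu_d$-action on $T$ is faithful (otherwise a neighborhood of $P$ in $X$ would lie in $\sing\mathbb{P}$, contradicting $\codim_X(X\cap\sing\mathbb{P})\geq 2$), the Chevalley--Shephard--Todd theorem shows $T/\mu_d$ is singular at the origin, hence $X$ is singular at $P$.

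The main obstacle I anticipate is precisely this local reduction: setting up the Luna slice and linearization correctly, checking that the slice of the smooth cone $\widetilde{X}$ is itself smooth of the expected dimension, and, above all, translating the well-formedness inequality $\codim_X(X\cap\sing\mathbb{P})\geq 2$ into the algebraic statement that $\mu_d$ contains no pseudoreflections. Once this dictionary is in place, Chevalley--Shephard--Todd closes the argument, while the first inclusion is comparatively routine descent of smoothness along the free-quotient locus.
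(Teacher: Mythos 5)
The paper does not prove this proposition; it is quoted verbatim from \cite{Dim86} (Proposition~8), so there is no internal proof to compare against. Your argument is correct and is essentially Dimca's original one: quasismoothness plus descent of regularity along the free locus gives $\sing(X)\subseteq X\cap\sing\mathbb{P}$, and for the converse the well-formedness inequality $\codim_X(X\cap\sing\mathbb{P})\geq 2$ is exactly what makes the stabilizer $\mu_d$ act on the slice as a \emph{small} (pseudoreflection-free) nontrivial group, whence the quotient germ is singular by Chevalley--Shephard--Todd (or Prill's theorem on small quotient singularities). The only point to handle with care in a full writeup is the one you already flag: Luna's slice theorem should be applied on an affine $\mathbb{G}_m$-stable chart $\{x_i\neq 0\}$ with $\widetilde{P}_i\neq 0$, where the orbit of $\widetilde{P}$ is closed, rather than on the quasi-affine $U$ itself.
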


\begin{proposition}[{\cite[Corollary~2.14]{przyjalkowski/bounds}}]
  Let \(X \subset \mathbb{P}(\rho)\) be a smooth well-formed weighted complete intersection. Then \(X\) is quasi-smooth.
\end{proposition}

\begin{corollary}\label{corollary:WCI-smooth-WF-criterion}
  Let \(X \subset \mathbb{P}(\rho)\) be a weighted complete intersection. The following assertions are equivalent:
  \begin{itemize}
  \item \(X\) is smooth and well-formed;
  \item \(\mathbb{P}(\rho)\) is well-formed, \(X\) is quasi-smooth, and \(X \cap \Sing(\mathbb{P}(\rho)) = \varnothing\).
  \end{itemize}
\end{corollary}

\begin{example}\label{example:WCI-non-WF-pathologies}
  If a weighted complete intersection is smooth but ill-formed, various pathologies can arise.
  \begin{itemize}
  \item There exists a K3-surface which can be realised as a smooth and quasi-smooth hypersurface \(X\) of degree 9 in \(\mathbb{P}(1, 2, 2, 3)\) which is not well-formed (see~{\cite[6.15(ii)]{ianofletcher/weighted}}). The usual adjunction formula (see Proposition~\ref{proposition:WCI-QS-WF-adjunction}) would formally imply that \(\omega_X \simeq \mathcal{O}_X(1)\), which is nonsense.
  \item A general weighted hypersurface of multidegree \(6\) in \(\mathbb{P}(2, 3, 5^{(t)})\) is smooth but not well-formed or quasi-smooth for any \(t > 0\) (see~\cite[Example~2.9]{przyjalkowski/on-automorphisms}).
  \end{itemize}
\end{example}

\begin{definition}[{\cite[Definition~6.5]{ianofletcher/weighted}}]\label{definition:WCI-degenerate}
  Let \(X \subset \mathbb{P}(a_0, \ldots, a_N)\) be a weighted complete intersection of multidegree \((d_1, \ldots, d_c)\). We refer to \(X\) as
  \begin{itemize}
  \item \emph{an intersection with a linear cone} if we have \(a_i = d_j\) for some \(i = 0, \ldots, N\) and \(j = 1, \ldots, c\);
  \item \emph{degenerate} if we have \(d_j = 1\) for some \(j = 1, \ldots, c\).
  \end{itemize}
\end{definition}

\begin{lemma}[{\cite[Corollary~3.3]{przyjalkowski/automorphisms}}]\label{lemma:WCI-QS-WF-projective-normality}
  Let \(X \subset \mathbb{P}(\rho)\) be a quasi-smooth well-formed weighted complete intersection. Then the restriction map
  \(
    H^0(\mathbb{P}(\rho), \mathcal{O}_{\mathbb{P}(\rho)}(m)) \rightarrow
    H^0(X, \mathcal{O}_X(m))
  \)
  is surjective for any \(m \in \mathbb{Z}_{\geqslant 0}\).
\end{lemma}

\begin{lemma}[{\cite[Theorem~3.4.4]{dolgachev/weighted}},~{\cite[Lemma~7.1]{ianofletcher/weighted}}]\label{lemma:WCI-QS-WF-O1-dimension}
  Let \(X \subset \mathbb{P}(\rho)\) be a quasi-smooth well-formed weighted complete intersection which is non-degenerate. Put \(\rho = (a_0, \ldots, a_N)\). The following identities hold:
  \[
    \dim(H^0(X, \mathcal{O}_X(1))) =
    \dim(H^0(\mathbb{P}(\rho), \mathcal{O}_{\mathbb{P}(\rho)}(1))) =
    \vert \{i \mid a_i = 1\} \vert.
  \]
\end{lemma}

\begin{corollary}\label{corollary:WCI-QS-O1-base-locus}
  Let \(X \subset \mathbb{P}(\rho)\) be a quasi-smooth weighted complete intersection.
  \begin{enumerate}
  \item If \(X\) is well-formed and non-degenerate, then we have \(\Bs(\vert \mathcal{O}_X(1) \vert) = X \cap \Bs(\vert \mathcal{O}_{\mathbb{P}(\rho)}(1) \vert)\).
  \item If \(\mathbb{P}(\rho)\) is well-formed, and \(X \cap \Bs(\vert \mathcal{O}_{\mathbb{P}(\rho)}(1) \vert) = \varnothing\), then \(X\) is smooth and well-formed.  
  \end{enumerate}
\end{corollary}

\begin{proof}
  Firstly, let \(X\) be quasi-smooth, well-formed, and non-degenerate. Then Lemmas~\ref{lemma:WCI-QS-WF-projective-normality} and~\ref{lemma:WCI-QS-WF-O1-dimension} imply that the restriction map \(H^0(\mathbb{P}(\rho), \mathcal{O}_{\mathbb{P}(\rho)}(1)) \rightarrow 
  H^0(X, \mathcal{O}_X(1))\) is an isomorphism, and we are done. Secondly, let \(\mathbb{P}(\rho)\) be well-formed, and \(X\) be quasi-smooth. Then by Corollary~\ref{corollary:WCI-smooth-WF-criterion} \(X\) is smooth and well-formed if and only if \(X \cap \Sing(\mathbb{P}(\rho)) = \varnothing\). But we have \(\Sing(\mathbb{P}(\rho)) \subset \Bs(\vert \mathcal{O}_{\mathbb{P}(\rho)}(1) \vert)\) by Corollary~\ref{corollary:WPS-WF-singular-locus-O1-base-locus}.
\end{proof}

\begin{corollary}\label{corollary:WCI-smoothness-genericity}
  Let \(\mathbb{P}(\rho)\) be a well-formed weighted projective space, \(\mu\) be a tuple of positive integers, and \(\mathcal{F}^{\rho}_{\mu} \colon T \rightarrow B\) be the family of weighted complete intersections in \(\mathbb{P}(\rho)\) of multidegree \(\mu\) (see Definition~\ref{definition:WCI-family}). The following subsets of \(B\) are open:
  \[
    B' = \{X \in B \mid X \text{ is smooth and well-formed}\}, \quad
    B'' = \{X \in B \mid X \text{ is amenable}\}.
  \]
\end{corollary}

\begin{proof}
  Corollary~\ref{corollary:WCI-smooth-WF-criterion} implies that \(B'\) consists of weighted complete intersections \(X\) such that \(X\) is quasi-smooth, and \(X \cap \Sing(\mathbb{P}(\rho)) = \varnothing\) (see~Lemma~\ref{lemma:WPS-WF-singular-locus}). The first condition is open by Definition~\ref{definition:WPV-quasismooth}. The second condition is open by Lemma~\ref{lemma:hilbert-series-semicontinuity}.

  By Definition~\ref{definition:WCI-amenable} the subset \(B''\) consists of weighted complete intersections \(X \in B'\) such that \(X\) is non-degenerate, and \(\Bs(\vert \mathcal{O}_X(1) \vert) = \varnothing\). Corollary~\ref{corollary:WCI-QS-O1-base-locus} implies that we can write the latter condition as \(X \cap \Bs(\vert \mathcal{O}_{\mathbb{P}(\rho)}(1) \vert) = \varnothing\) (see Lemma~\ref{lemma:WPS-WF-On-sheaves}). Then \(B''\) is also open by Lemma~\ref{lemma:hilbert-series-semicontinuity}.
\end{proof}

\begin{proposition}[{\cite[Proposition~2.9]{przyjalkowski/codimension}}]\label{proposition:WCI-general-QS-implies-WF}
  Let \(X \subset \mathbb{P}(\rho)\) be a quasi-smooth weighted complete intersection of multidegree \(\mu\). Assume that \(\dim(X) \geqslant 3\), and \(X\) is general in the family \(\mathcal{F}^{\rho}_{\mu}\) of weighted complete intersections. Then there exists a quasi-smooth well-formed weighted complete intersection \(X'\) isomorphic to \(X\) which is not an intersection with a linear cone.
\end{proposition}

\begin{proposition}[{\cite[Proposition~1.5]{przyjalkowski/automorphisms}}]\label{proposition:WCI-QS-WF-implies-unique}
  Let \(X' \subset \mathbb{P}(\rho')\) and \(X'' \subset \mathbb{P}(\rho'')\) be quasi-smooth well-formed weighted complete intersections of multidegrees \(\mu'\) and \(\mu''\), respectively, such that \(X'\) and \(X''\) are not intersections with linear cones. Let us assume that \(X' \simeq X''\), and \(\dim(X') \geqslant 3\). Then \(\rho' = \rho''\), and \(\mu' = \mu''\).
\end{proposition}

\begin{remark}\label{remark:WCI-QS-WF-implies-unique}
  The assertion of Proposition~\ref{proposition:WCI-QS-WF-implies-unique} also holds for smooth well-formed Fano weighted complete intersections of dimension 2, and if we drop the dimension assumption, then \(X \simeq \mathbb{P}^1\) is contained in two families of complete intersections: namely, \(\mathbb{P}^1\) itself and conics (see~\cite[Lemma~2.6]{przyjalkowski/automorphisms}).
\end{remark}

\begin{lemma}[{\cite[Lemma~18.14]{ianofletcher/weighted}},~{\cite[Proposition~3.1(1)]{chen/quasismooth}}]\label{lemma:WCI-QS-WF-degrees-bound}
  Let \(X\) be a quasi-smooth well-formed weighted complete intersection in \(\mathbb{P}(a_0, \ldots, a_N)\) of multidegree \((d_1, \ldots, d_c)\) which is not an intersection with a linear cone. The following estimates hold: \(d_{c - j} > a_{N - j}\) for all \(j = 1, \ldots, c\), and \(d_c \geqslant 2 a_N\).
\end{lemma}

\begin{proposition}[{\cite[Remark~4.2]{okada/rationality}},~{\cite[Proposition~2.3]{pizzato/nonvanishing}}]
  Let \(X \subset \mathbb{P}(\rho)\) be a quasi-smooth well-formed weighted complete intersection of dimension at least 3. Then the divisor class group \(\Cl(X)\) is generated by the class of the divisorial sheaf \(\mathcal{O}_X(1)\).
\end{proposition}

\begin{proposition}[{\cite[Theorem~3.3.4]{dolgachev/weighted}},~{\cite[\nopp 6.14]{ianofletcher/weighted}}]\label{proposition:WCI-QS-WF-adjunction}
  Let \(X \subset \mathbb{P}(a_0, \ldots, a_N)\) be a quasi-smooth well-formed weighted complete intersection of dimension at least 2 and multidegree \((d_1, \ldots, d_c)\), and \(\omega_X\) be the dualising sheaf of \(X\). Then the following identity holds: \(\omega_X \simeq \mathcal{O}_X (\sum_{j = 1}^c d_j - \sum_{i = 0}^N a_i)\).
\end{proposition}

\begin{corollary}[{\cite[Corollary~2.8]{przyjalkowski/automorphisms}}]\label{corollary:WCI-fano-index}
  Let \(X \subset \mathbb{P}(a_0, \ldots, a_N)\) be a smooth well-formed Fano weighted complete intersection of dimension at least 2 and multidegree \((d_1, \ldots, d_c)\). Then index of \(X\) equals to \(i_X = \sum_{i = 0}^N a_i - \sum_{j = 1}^c d_j\).
\end{corollary}

\begin{corollary}[see the proof of~{\cite[Theorem~2.7]{przyjalkowski/hodge}}]
  Let \(X \subset \mathbb{P}(a_0, \ldots, a_N)\) be a smooth well-formed Fano weighted complete intersection of multidegree \((d_1, \ldots, d_c)\) and dimension at least 2, and \(i_X\) be its index. Then anticanonical degree of \(X\) is equal to \((\prod_{j = 1}^c d_j / \prod_{i = 0}^N a_i) \cdot i_X^{\dim(X)}\).
\end{corollary}

\begin{corollary}\label{corollary:WCI-fano-degree-one}
  Let \(X \subset \mathbb{P}(a_0, \ldots, a_N)\) be a smooth well-formed Fano weighted complete intersection of multidegree \((d_1, \ldots, d_c)\). Then we have \((-K_X)^{\dim(X)} = 1\) if and only if \(i_X = 1\), and \(\prod_{i = 0}^N a_i = \prod_{j = 1}^c d_j\).
\end{corollary}

\begin{proposition}[{\cite[Theorem~1.2]{pizzato/nonvanishing}},~{\cite[Corollary~3.4]{przyjalkowski/codimension}}]\label{proposition:WCI-fano-CY-O1-dim-bound}
  Let \(X \subset \mathbb{P}(\rho)\) be a smooth well-formed weighted complete intersection which is not an intersection with a linear cone. The following assertions hold.
  \begin{enumerate}
  \item If \(X\) is Calabi--Yau, then we have \(\dim(H^0(X, \mathcal{O}_X(1))) \geqslant \codim(X)\), and the equality holds if and only if \(X \subset \mathbb{P}(1^{(c)}, 2^{(c)}, 3^{(c)})\) is a weighted complete intersection of multidegree \((6^{(c)})\).
  \item If \(X\) is Fano, then we have \(\dim(H^0(X, \mathcal{O}_X(1))) \geqslant \codim(X) + i_X\), and the equality holds if and only if \(X \subset \mathbb{P}(1^{(i_X + c)}, 2^{(c)}, 3^{(c)})\) is a weighted complete intersection of multidegree \((6^{(c)})\).
  \end{enumerate}
\end{proposition}

\subsection{Combinatorics of weighted complete intersections}

Now we discuss how certain geometrical properties of weighted complete intersections affect the combinatorics of its weights and degrees.

\begin{definition}[{\cite[Example~1.5]{flajolet/combinatorics}}]\label{definition:denumerant}
  Let \(\rho = (a_0, \ldots, a_N)\) be a tuple of positive integers. For any \(n \in \mathbb{Z}_{\geqslant 0}\) the \emph{denumerant} \(D(\rho; n)\) is the number of solutions \((\alpha_0, \ldots, \alpha_N) \in \mathbb{Z}_{\geqslant 0}^{N + 1}\) of the equation \(\sum_{i = 0}^N a_i \alpha_i = n\).
\end{definition}

\begin{lemma}[{\cite[Proposition~I.1, Example~1.5]{flajolet/combinatorics}}]\label{lemma:denumerants-series}
  Let \(\rho = (a_0, \ldots, a_N)\) be a tuple of positive integers. The generating series of denumerants \(D(\rho; \bullet)(T)\) has the following form:
  \[
    D(\rho; \bullet)(T) = \sum_{n = 0}^{\infty} D(\rho; n) T^n =
    \frac{1}{\prod_{i = 0}^N (1 - T^{a_i})}.
  \]
\end{lemma}

For example, there exist necessary and sufficient conditions for quasi-smoothness of a weighted complete intersection in terms of its weights and degrees. Let us introduce the preliminary notations.

\begin{definition}\label{definition:Q1-Q2-type}
  Let \(\rho = (a_0, \ldots, a_N)\) and \(\mu = (d_1, \ldots, d_m)\) be tuples of positive integers. For any subset \(I \subset \{0, \ldots, N\}\) we introduce the following notation:
  \[
    \rho_I = (a_{i_1}, \ldots, a_{i_s}), \quad
    I = \{i_1, \ldots, i_s\}, \quad i_1 < \cdots < i_s.
  \]
  For any subsets \(I \subset \{0, \ldots, N\}\) and \(J \subset \{1, \ldots, m\}\) we also put \(I^{\circ} = \{0, \ldots, N\} \setminus I\) and \(J^{\circ} = \{1, \ldots, m\} \setminus J\).

  We say that a subset \(I \subset \{0, \ldots, N\}\) is
  \begin{itemize}
  \item \emph{of type} \((Q_1)\) if the following condition holds:
    \begin{itemize}
    \item there exists a subset \(J \subset \{1, \ldots, m\}\) such that \(\vert J \vert = \vert I \vert\), and \(D(\rho_I; d_j) > 0\) for any \(j \in J\).
    \end{itemize}
  \item \emph{of type} \((Q_2)\) if the following two conditions hold:
    \begin{itemize}
    \item there exists a subset \(J \subset \{1, \ldots, m\}\) such that \(\vert J \vert < \vert I \vert\), and \(D(\rho_I; d_j) > 0\) for any \(j \in J\);
    \item for any \(\mu = 1, \ldots, \vert I \vert - \vert J \vert\) there exists a subset
      \(
        E_{\mu} = \{e_{j, \mu} \mid j \in J^{\circ}\} \subset I^{\circ}
      \)
      of cardinality \(\vert J^{\circ} \vert\) such that we have \(D(\rho_I; d_j - a_{e_{\mu, j}}) > 0\) for any \(j \in J^{\circ}\), and for any subset \(G \subset J^{\circ}\) the following estimate holds:
      \(
      \vert \{e_{g, \mu} \mid g \in G, \;
      \mu = 1, \ldots, \vert I \vert - \vert J \vert\} \vert \geqslant
        \vert I \vert - \vert J \vert + \vert G \vert - 1.
      \)
    \end{itemize}
  \end{itemize}
\end{definition}

\begin{definition}
  Let \(\rho = (a_0, \ldots, a_N)\) and \(\mu\) be tuples of positive integers. We refer to \((\rho; \mu)\) as a \emph{consistent pair} if any subset \(I \subset \{0, \ldots, N\}\) is of type \((Q_1)\) or \((Q_2)\) with respect to Definition~\ref{definition:Q1-Q2-type}.  
\end{definition}

\begin{proposition}[{\cite[Proposition~3.1]{pizzato/nonvanishing}}]\label{proposition:WCI-QS-criterion}
  Assume that there exists a weighted complete intersection \(X \subset \mathbb{P}(\rho)\) of multidegree \(\mu\) which is not an intersection with a linear cone. The following assertions hold.
  \begin{enumerate}
  \item If \(X\) is quasi-smooth, then the pair \((\rho; \mu)\) is consistent.
  \item Assume that the pair \((\rho; \mu)\) is consistent, and \(X\) is general in the family \(\mathcal{F}^{\rho}_{\mu}\) of weighted complete intersections. Then \(X\) is quasi-smooth. 
  \end{enumerate}
\end{proposition}

Another interesting geometrical property is the non-intersection with a given coordinate stratum in a weighted projective space (for example, cf. Lemma~\ref{lemma:WPS-WF-singular-locus}), which allows the following combinatorial description.

\begin{definition}
  Let \(\rho = (a_0, \ldots, a_N)\) and \(\mu = (d_1, \ldots, d_m)\) be tuples of positive integers. A subset \(I \subset \{0, \ldots, N\}\) \emph{is of type \((Q_1^+)\)} if \(\vert I \vert \leqslant m\), and any subset \(\widetilde{I} \subset I\) is of type \((Q_1)\).
\end{definition}

\begin{lemma}\label{lemma:WCI-stratum-intersection}
  Let \(X \subset \mathbb{P}(\rho) = \Proj(R^{\rho})\) be a weighted complete intersection of multidegree \(\mu\). Put \(\rho = (a_0, \ldots, a_N)\), and let \(I \subset \{0, \ldots, N\}\) be a subset. We also use the following notation:
  \[
    \Lambda_I = \{X_i = 0 \mid i \not \in I\} \subset \mathbb{P}(\rho), \quad
    R^{\rho} = \Bbbk[X_0, \ldots, X_N].
  \]
  If \(X \cap \Lambda_I = \varnothing\), then the subset \(I\) is of type \((Q_1^+)\).
\end{lemma}

\begin{proof}
  Put \(\mu = (d_1, \ldots, d_c)\). The defining ideal \(I_X \subset R^{\rho}\) (see Definition~\ref{definition:defining-ideal}) is generated by a regular sequence \((f_1, \ldots, f_c)\) of weighted homogeneous polynomials of degrees \(d_j = \deg(f_j)\) for all \(j = 1, \ldots, c\). For any subset \(\widetilde{I} \subset I\) we introduce the monomial ideal
  \(
    S_{\widetilde{I}} = \langle \{X_i \mid i \not \in \widetilde{I}\} \rangle \subset R^{\rho}
  \).

  By assumption for any subset \(\widetilde{I} \subset I\) we have \(\height(I_X + S_{\widetilde{I}}) = \vert \widetilde{I} \vert\). Consequently, for any subset \(\widetilde{I} \subset I\) there should exist a subset \(J_{\widetilde{I}} \subset \{1, \ldots, c\}\) of cardinality \(\vert \widetilde{I} \vert\) such that the polynomial \(f_j\) contains a monomial in variables \(\{X_i \mid i \in \widetilde{I}\}\) for any \(j \in J_{\widetilde{I}}\), so we are done.
\end{proof}

\begin{definition}\label{definition:strictly-regular-pair}
  Let \(\rho = (a_0, \ldots, a_N)\) and \(\mu\) be tuples of positive integers. We refer to \((\rho; \mu)\) as a \emph{strictly regular pair} if any subset \(I \subset \{0, \ldots, N\}\) such that \(\gcd(\{a_i \mid i \in I\}) > 1\) is of type \((Q_1^+)\).  
\end{definition}

\begin{corollary}[cf.~{\cite[Lemma~2.15]{przyjalkowski/bounds}},~{\cite[Proposition~4.1]{chen/quasismooth}}]\label{corollary:WCI-smooth-WF-combinatorics}
  Let \(X \subset \mathbb{P}(\rho)\) be a smooth well-formed weighted complete intersection of multidegree \(\mu\) which is not an intersection with a linear cone. Then the pair \((\rho; \mu)\) is consistent and strictly regular.
\end{corollary}

\begin{proof}
  By Corollary~\ref{corollary:WCI-smooth-WF-criterion} \(X\) is quasi-smooth and does not intersect \(\Sing(\mathbb{P}(\rho))\). Then the pair \((\rho; \mu)\) is consistent by Proposition~\ref{proposition:WCI-QS-criterion}. Lemma~\ref{lemma:WPS-WF-singular-locus} explicitly describes \(\Sing(\mathbb{P}(\rho))\) as a union of coordinate strata. Then the pair \((\rho; \mu)\) is strictly regular by Lemma~\ref{lemma:WCI-stratum-intersection}.
\end{proof}

\begin{corollary}\label{corollary:WCI-amenable-combinatorics}
  Let \(X \subset \mathbb{P}(a_0, \ldots, a_N)\) be an amenable weighted complete intersection of multidegree \(\mu\). Then the subset \(\{i \mid a_i > 1\}\) is of type \((Q_1^+)\).
\end{corollary}

\begin{proof}
  By Definition~\ref{definition:WCI-amenable} we should have \(\Bs(\vert \mathcal{O}_X(1) \vert) = \varnothing\). Corollary~\ref{corollary:WCI-QS-O1-base-locus} implies that \(X \cap \Bs(\vert \mathcal{O}_{\mathbb{P}(\rho)}(1) \vert) = \varnothing\). Note that \(\Bs(\vert \mathcal{O}_{\mathbb{P}(\rho)}(1)) \vert\) can be described as the coordinate stratum in \(\mathbb{P}(\rho)\) (see Lemma~\ref{lemma:WPS-WF-On-sheaves}). Then the statement follows from Lemma~\ref{lemma:WCI-stratum-intersection}.
\end{proof}

It is useful to compare Definition~\ref{definition:strictly-regular-pair} with its weaker version which was introduced in~\cite{pizzato/nonvanishing}.

\begin{definition}[{\cite[Definition~4.1]{pizzato/nonvanishing}}]
  Let \(\rho = (a_0, \ldots, a_N)\) and \(\mu = (d_1, \ldots, d_m)\) be tuples of positive integers. We refer to \((\rho; \mu)\) as a \emph{regular pair} if for any subset \(I \subset \{0, \ldots, N\}\) such that \(\gcd(\{a_i \mid i \in I\}) > 1\) there exists a subset \(J \subset \{1, \ldots, c\}\) of cardinality \(\vert I \vert\) such that \(\gcd(\{a_i \mid i \in I\})\) divides \(\gcd(\{d_j \mid j \in J\})\).
\end{definition}

We can restate Corollaries~\ref{corollary:WCI-smooth-WF-combinatorics} and~\ref{corollary:WCI-amenable-combinatorics} in these terms, which is more suitable for explicit computations.

\begin{notation}
  For any \(\rho = (a_0, \ldots, a_N) \in \mathbb{Z}_{> 1}^{N + 1}\) and \(h \in \mathbb{Z}_{> 0}\) we put \(I(h) = \{i \mid a_i \in h \mathbb{Z}, \; a_i > 1\}\).
\end{notation}

\begin{remark}
  We have \(I(h) = \{i \mid a_i \in h \mathbb{Z}\}\) for \(h > 1\), and \(I(1) = \{i \mid a_i > 1\}\) for \(h = 1\).
\end{remark}

\begin{lemma}  
  Let \((\rho; \mu)\) be a consistent and regular pair. Then \(I(h)\) is of type \((Q_1)\) for any \(h \in \mathbb{Z}_{> 1}\).
\end{lemma}

\begin{proof}
  Put \(\mu = (d_1, \ldots, d_m)\) and \(J(h) = \{j \mid d_j \in h \mathbb{Z}\}\). We claim that the subset \(I(h)\) is of type \((Q_1)\). On the contrary, suppose that \((Q_2)\) holds. Firstly, there exists a subset \(J \subset \{1, \ldots, m\}\) of cardinality less than \(\vert I(h) \vert\) such that for all \(j \in J\) we have \(D(\rho_{I(h)}; d_j) > 0\). Secondly, for every \(j \in J^{\circ} = \{1, \ldots, m\} \setminus J\) and \(\mu = 1, \ldots, \vert I(h) \vert - \vert J \vert\) there are integers \(e_{j, \mu} \in \{0, \ldots, N\} \setminus I\) such that \(D(\rho_{I(h)}; d_j - a_{e_{j, \mu}} ) > 0\). By definition \(a_{e_{j, \mu}}\) are not divisible by \(h\), then \(d_j\) is not divisible by \(h\) for any \(j \in J^{\circ}\). In other words, we have \(m - \vert J \vert \leqslant m - \vert J(h) \vert\), i.e., \(\vert J(h) \vert \leqslant \vert J \vert\). From the regularity assumption we have \(\vert I(h) \vert \leqslant \vert J(h) \vert\), hence \(\vert I(h) \vert \leqslant \vert J \vert\), which is a contradiction.
\end{proof}

\begin{lemma}\label{lemma:Q1-type-implies-Q1-plus-type}
  Let \(\rho = (a_0, \ldots, a_N)\) and \(\mu = (d_1, \ldots, d_m)\) be tuples of positive integers such that the pair \((\rho; \mu)\) is consistent and regular.

  Assume that \(\vert I(h) \vert \leqslant m\), and \(I(h)\) is of type \((Q_1)\). Then \(I(h)\) is of type \((Q_1^+)\).
\end{lemma}

\begin{proof}
  Put \(I = I(h)\). Let \(\widetilde{I} \subset I\) be any non-trivial subset. We claim that \(\widetilde{I}\) is of type \((Q_1)\). On the contrary, suppose that it is of type \((Q_2)\). Firstly, there exists a subset \(\widetilde{J} \subset \{1, \ldots, m\}\), of cardinality less than \(\vert \widetilde{I} \vert\) such that for all \(j \in \widetilde{J}\) we have \(D(\rho_{\widetilde{I}}; d_j) > 0\). Secondly, for every \(j \in \widetilde{J}^{\circ} = \{1, \ldots, m\} \setminus \widetilde{J}\) and \(\mu = 1, \ldots, \vert \widetilde{I} \vert - \vert \widetilde{J} \vert\) there are integers \(e_{j, \mu} \in \{0, \ldots, N\} \setminus \widetilde{I}\) such that \(D(\rho_{\widetilde{I}}; d_j - a_{e_{j, \mu}}) > 0\), and for any \(G \subset \widetilde{J}\) we have
  \[
    \vert \{e_{g, \mu} \mid g \in G, \;
    \mu = 1, \ldots, \vert \widetilde{I} \vert - \vert \widetilde{J} \vert\} \vert \geqslant
    \vert \widetilde{I} \vert - \vert \widetilde{J} \vert + \vert G \vert - 1.
  \]

  By assumption there exists a subset \(\widetilde{G} \subset \widetilde{J}^{\circ}\) of cardinality \(\vert I \vert - \vert \widetilde{J} \vert\) such that \(D(\rho_I; d_j) > 0\) for any \(j \in \widetilde{G}\), since \(I\) is of type \((Q_1)\), and \(\vert I \vert \leqslant m\). But then by the definition of the subset \(I = I(h)\) we have \(e_{j, \mu} \in I\) for any \(j \in \widetilde{J}\). We obtain a contradiction:
  \(
  \vert \widetilde{I} \vert + \vert \{e_{g, \mu} \mid g \in \widetilde{G}, \;
  \mu = 1, \ldots, \vert \widetilde{I} \vert - \vert \widetilde{J} \vert\} \vert \geqslant
  2 (\vert \widetilde{I} \vert - \vert \widetilde{J} \vert) - 1 + \vert I \vert > \vert I \vert.
  \) 
\end{proof}

\begin{corollary}
  A consistent and regular pair is strictly regular.
\end{corollary}

\begin{corollary}[cf.~Corollary~\ref{corollary:WCI-amenable-combinatorics} and Example~\ref{example:WCI-non-amenable}]
  Let \(X \subset \mathbb{P}(\rho)\) be a quasi-smooth well-formed weighted complete intersection of multidegree \(\mu\) which is non-degenerate. The following assumptions are equivalent:
  \begin{itemize}
  \item the subset \(I(1)\) is of type \((Q_1)\), the pair \((\rho; \mu)\) is regular, and \(\dim(\vert \mathcal{O}_X(1) \vert) \geqslant \dim(X)\);
  \item the subset \(I(1)\) is of type \((Q_1^+)\).
  \end{itemize}
\end{corollary}

\begin{proof}
  Assume that the subset \(I(1)\) is of type \((Q_1)\), the pair \((\rho; \mu)\) is regular, and \(\dim(\vert \mathcal{O}_X(1) \vert) \geqslant \dim(X)\). Lemma~\ref{lemma:WCI-QS-WF-O1-dimension} implies that \(\vert I(1) \vert \leqslant c\). Then the statement follows from Lemma~\ref{lemma:Q1-type-implies-Q1-plus-type}.

  Conversely, let us assume that the subset \(I(1)\) is of type \((Q_1^+)\). Then by definition the pair \((\rho; \mu)\) is regular, the subset \(I(1)\) is of type \((Q_1)\), and \(\vert I(1) \vert \leqslant c\). Then Lemma~\ref{lemma:WCI-QS-WF-O1-dimension} implies that \(\dim(\vert \mathcal{O}_X(1) \vert) \geqslant \dim(X)\).
\end{proof}

\section{Series and semi-series of smooth well-formed Fano weighted complete intersections}\label{section:series-semiseries}

\subsection{Structure of series and semi-series}\label{subsection:structure}

Firstly, we prove that the partition into series and semi-series (see Definition~\ref{definition:series-semiseries}) is correctly defined.

\begin{lemma}\label{lemma:WCI-fano-hyperplane-section-inverse}
  Let \(X \subset \mathbb{P}(\rho)\) be a smooth well-formed Fano weighted complete intersection of multidegree \(\mu\) which is not an intersection with a linear cone. Then a general element \(X'\) of the family \(\mathcal{F}^{(1; \rho)}_{\mu}\) of weighted complete intersections is also a smooth well-formed Fano weighted complete intersection which is not an intersection with a linear cone. If \(\dim(X) > 1\), then index of \(X'\) equals \(i_{X'} = i_X + 1\).
\end{lemma}

\begin{proof}
  By definition \(\mathbb{P}(1; \rho)\) is a well-formed weighted projective space. Note that elements of the family \(\mathcal{F}^{(1; \rho)}_{\mu} \colon T \rightarrow B\) are not intersections with linear cones. Now we only have to prove that the subset
  \[
    B' = \{X' \in B \mid X' \text{ is smooth and well-formed}\}
  \]
  is non-empty and open (the Fano condition would follow from Proposition~\ref{proposition:WCI-QS-WF-adjunction}).

  By Corollary~\ref{corollary:WCI-smooth-WF-criterion} we can present \(B'\) in the form \(B' = B'_1 \cap B'_2\), where
  \[
    B'_1 = \{X' \in B \mid X' \text{ is quasi-smooth}\}, \quad
    B'_2 = \{X' \in B \mid X' \cap \Sing(\mathbb{P}(1; \rho)) = \varnothing\}.
  \]
  These subsets are open in \(B\) (see the proof of Corollary~\ref{corollary:WCI-smoothness-genericity}).

  Note that \(B'_2\) is not empty: it contains a projective cone over \(X\) by Lemma~\ref{lemma:WPS-WF-singular-locus}. Proposition~\ref{proposition:WCI-QS-criterion} implies that \(B'_1\) is non-empty as well. Since \(B\) is irreducible by Lemma~\ref{lemma:WCI-family-irreducible-base}, we have \(B'_1 \cap B'_2 \neq \varnothing\).
\end{proof}

\begin{corollary}\label{corollary:WCI-fano-upper-index}
  Let \(\mathcal{F} \in \Theta\) be a family of weighted complete intersections. Then \(\mathcal{F}^{(l)}_{(0)} \in \Theta\) for any \(l \in \mathbb{Z}_{\geqslant 0}\).
\end{corollary}

\begin{lemma}[{see the proof of Theorem~\ref{theorem:WCI-fano-small-variance}}]\label{lemma:WCI-fano-hyperplane-section}
  Let \(X \subset \mathbb{P}(1; \rho)\) be a smooth well-formed Fano weighted complete intersection of multidegree \(\mu\) and dimension at least 2 which is not an intersection with a linear cone. Then a general element \(X'\) of the family of weighted complete intersections \(\mathcal{F}^{\rho}_{\mu}\) is also a smooth well-formed weighted complete intersection which is not an intersection with a linear cone.

  If \(i_X > 1\), then \(X'\) is Fano. Moreover, if \(\dim(X) > 2\), then index of \(X'\) equals \(i_{X'} = i_X - 1\).
\end{lemma}

\begin{proof}
  By~\cite[Theorem~1.2]{pizzato/nonvanishing} the linear system \(\vert \mathcal{O}_X(1) \vert\) is not empty. Let \(X'\) be a general divisor from \(\vert \mathcal{O}_X(1) \vert\). Then \(X'\) is smooth by~\cite[Theorem~1.2]{pizzato/nonvanishing}. Moreover, \(X'\) is a well-formed weighted complete intersection of multidegree \(\mu\) in \(\mathbb{P}(\rho)\) by~\cite[Lemma~3.3]{przyjalkowski/codimension}, and \(X'\) is not an intersection with a linear cone. Now we only have to apply Proposition~\ref{proposition:WCI-QS-WF-adjunction}.
\end{proof}

\begin{lemma}\label{lemma:WCI-amenability-necessary}
  Let \(X \subset \mathbb{P}(\rho)\) be a quasi-smooth subscheme in a well-formed weighted projective space. Put \(\rho = (a_0, \ldots, a_N)\), where \(a_i \neq 2\) for any \(i = 0, \ldots, N\). Assume that there exists an element \(Q \in \vert \mathcal{O}_{\mathbb{P}(\rho)}(2) \vert\) such that \(X \cap Q\) is also quasi-smooth, and \(\codim_X(X \cap Q) = 1\). Then \(X\) does not intersect \(\Bs(\vert \mathcal{O}_{\mathbb{P}(\rho)}(1) \vert)\).
\end{lemma}

\begin{proof}
  Lemma~\ref{lemma:WPS-WF-On-sheaves} implies that the base locus of the linear system \(\vert \mathcal{O}_{\mathbb{P}(\rho)}(1) \vert\) can be described as
  \[
    \Bs(\vert \mathcal{O}_{\mathbb{P}(\rho)}(1) \vert) = \{X_i = 0 \mid a_i = 1\} \subset
    \mathbb{P}(\rho) = \Proj(R^{\rho}), \quad R^{\rho} = \Bbbk[X_0, \ldots, X_N].
  \]
  Let \(I_X \subset R^{\rho}\) be the defining ideal of \(X\) (see Definition~\ref{definition:defining-ideal}), and \(f \in R^{\rho}\) be a quadratic polynomial in the variables \(\{X_i \mid a_i = 1\}\) that represents \(Q\) (see Lemma~\ref{lemma:WPS-WF-On-sheaves}). It is clear that \(f\) vanishes on the stratum
  \(
  \{X_i = 0 \mid a_i = 1\} \subset \mathbb{A}^{N + 1}
  \)
  and so do its partial derivatives. But by assumption the subscheme
  \(
  \Spec(R^{\rho} / (I_X + (f))) \subset \mathbb{A}^{N + 1}
  \)
  is smooth outside the origin, hence \(X\) does not intersect \(\Bs(\vert \mathcal{O}_{\mathbb{P}(\rho)}(1) \vert)\).
\end{proof}

\begin{corollary}\label{corollary:WCI-amenability-necessary}
  Let \(X \subset \mathbb{P}(\rho)\) be a quasi-smooth well-formed weighted complete intersection which is non-degenerate. Put \(\rho = (a_0, \ldots, a_N)\), where \(a_i \neq 2\) for any \(i = 0, \ldots, N\). Assume that there exists a quasi-smooth element \(Q \in \vert \mathcal{O}_X(2) \vert\). Then \(X\) and \(Q\) are amenable weighted complete intersections.
\end{corollary}

\begin{proof}
  Lemma~\ref{lemma:WCI-QS-WF-projective-normality} implies that the element \(Q \in \vert \mathcal{O}_X(2) \vert\) can be lifted to an element \(\widetilde{Q} \in \vert \mathcal{O}_{\mathbb{P}(\rho)}(2) \vert\) such that \(\widetilde{Q} \cap X = Q\). Then Lemma~\ref{lemma:WCI-amenability-necessary} implies that \(X\) does not intersect \(\Bs(\vert \mathcal{O}_{\mathbb{P}(\rho)}(1) \vert)\) (and so does not \(Q \subset X\)). But then Corollary~\ref{corollary:WCI-QS-O1-base-locus} implies that \(X\) and \(Q\) are amenable.
\end{proof}

\begin{lemma}
  Let \(X \subset \mathbb{P}(\rho)\) be a weighted complete intersection in a well-formed weighted projective space of multidegree \(\mu\) which is not an intersection with a linear cone. Put \(\rho = (a_0, \ldots, a_N)\), where \(a_i \neq 2\) for any \(i = 0, \ldots, N\). Assume that \(X\) does not intersect \(\Bs(\vert \mathcal{O}_{\mathbb{P}(\rho)}(1) \vert)\).

  Then general elements \(X' \in \mathcal{F}^{\rho}_{\mu}\) and \(X'' \in \mathcal{F}^{(1, 1; \rho)}_{(2; \mu)}\) are strictly amenable.
\end{lemma}

\begin{proof}
  By assumption and Lemma~\ref{lemma:WPS-WF-On-sheaves} \(X\) does not intersect the coordinate stratum 
  \[
    \Bs(\vert \mathcal{O}_{\mathbb{P}(\rho)}(1) \vert) =
    \{X_i = 0 \mid a_i = 1\} \subset \mathbb{P}(\rho) = \Proj(R^{\rho}), \quad
    R^{\rho} = \Bbbk[X_0, \ldots, X_N].
  \]
  Lemma~\ref{lemma:WCI-stratum-intersection} implies that the subset \(\{i \mid a_i > 1\}\) is of type \((Q_1^+)\). We can apply Proposition~\ref{proposition:WCI-QS-criterion} to conclude that a general element \(X' \in \mathcal{F}^{\rho}_{\mu}\) is quasi-smooth. Moreover, \(X'\) also does not intersect \(\Bs(\vert \mathcal{O}_{\mathbb{P}(\rho)}(1) \vert)\) by Lemma~\ref{lemma:hilbert-series-semicontinuity}. Then it is smooth and well-formed by Corollary~\ref{corollary:WCI-smooth-WF-criterion}, hence  \(X'\) is amenable by Corollary~\ref{corollary:WCI-QS-O1-base-locus}.

  Consider a general element \(\widetilde{X} \in \mathcal{F}^{(1, 1; \rho)}_{\mu}\), which is smooth and well-formed by Lemma~\ref{lemma:WCI-fano-hyperplane-section-inverse}, hence is amenable by Corollary~\ref{corollary:WCI-QS-O1-base-locus}. It is clear that \(\vert \mathcal{O}_{\widetilde{X}}(2) \vert \neq \varnothing\), so let \(Q \in \vert \mathcal{O}_{\widetilde{X}}(2) \vert\) be a general element. Lemma~\ref{lemma:WCI-QS-WF-projective-normality} implies that \(Q\) can be lifted up to a general element \(X'' \in \mathcal{F}^{(1, 1; \rho)}_{(2; \mu)}\). But we have \(X'' \subset \widetilde{X}\), hence \(X'' \cap \Bs(\vert \mathcal{O}_{\mathbb{P}(1, 1; \rho)}(1) \vert) \subset \mathbb{P}(1, 1; \rho)\) is empty as well. Then we can repeat the same argument for \(X''\).
\end{proof}

\begin{corollary}
  Let \(X \subset \mathbb{P}(\rho)\) be a strictly amenable weighted complete intersection of multidegree \(\mu\). Then a general element \(X' \in \mathcal{F}^{(1, 1; \rho)}_{(2; \mu)}\) is strictly amenable.
\end{corollary}

\begin{corollary}\label{corollary:WCI-fano-lower-index}
  Let \(\mathcal{F} \in \Theta\) be a family of weighted complete intersections. Then for any \(m \in \mathbb{Z}_{> 0}\) we have \(\mathcal{F}^{(0)}_{(m)} \in \Theta\) if and only if \(\mathcal{F}\) contains a strictly amenable weighted complete intersection.
\end{corollary}

\begin{lemma}\label{lemma:WCI-amenability-elimination}
  Let \(X \subset \mathbb{P}(1, 1; \rho)\) be a smooth well-formed Fano weighted complete intersection of multidegree \((2; \mu)\) which is not an intersection with a linear cone. Then a general element \(X'\) of the family of weighted complete intersections \(\mathcal{F}^{\rho}_{\mu}\) is a strictly amenable Fano weighted complete intersection. If \(\dim(X) > 1\), then index of \(X'\) coincide with index of \(X\).
\end{lemma}

\begin{proof}
  By definition there exist elements \(Q \in \vert \mathcal{O}_{\mathbb{P}(\rho)}(2) \vert\) and \(X' \in \mathcal{F}^{(1, 1; \rho)}_{\mu}\) such that \(Q \cap X' = X\). Then \(X'\) is a strictly amenable weighted complete intersection by Corollary~\ref{corollary:WCI-amenability-necessary}, and is Fano by Corollary~\ref{corollary:WCI-fano-index}. Now we only have to apply Lemma~\ref{lemma:WCI-fano-hyperplane-section} twice.
\end{proof}

\begin{lemma}\label{lemma:WCI-fano-generator}
  Let \(\mathcal{F}^{\rho}_{\mu} \in \Theta\) be a family of weighted complete intersections. Put
  \[
    \rho = (a_0, \ldots, a_N), \quad \mu = (d_1, \ldots, d_c); \quad
    I(\mathcal{F}^{\rho}_{\mu}) = \sum_{i = 0}^N a_i - \sum_{j = 1}^c d_j, \quad
    s_2(\mathcal{F}^{\rho}_{\mu}) = \vert \{j \mid d_j > 2\} \vert.
  \]
  Then the family \(\mathcal{F}^{\rho}_{\mu}\) generates the series or semi-series if and only if we have \(I(\mathcal{F}^{\rho}_{\mu}) = 1\) and \(s_2(\mathcal{F}^{\rho}_{\mu}) = c\).
\end{lemma}

\begin{proof}
  Let us prove the ``if'' part. On the one hand, the assumption \(s_2(\mathcal{F}^{\rho}_{\mu}) = c\) implies that we cannot have \(\mathcal{F}^{\rho}_{\mu} = \mathcal{G}^{(0)}_{(1)}\) for any family \(\mathcal{G} \in \Theta\). On the other hand, by Corollary~\ref{corollary:WCI-fano-index} a general member of the family \(\mathcal{F}^{\rho}_{\mu}\) has Fano index 1. Lemma~\ref{lemma:WCI-fano-hyperplane-section} implies that there exists a family \(\mathcal{G}\) of weighted complete intersections such that \(\mathcal{F}^{\rho}_{\mu} = \mathcal{G}^{(1)}_{(0)}\), but its general element is a Calabi--Yau variety, hence \(\mathcal{G} \not \in \Theta\).

  Let us prove the ``only if'' part. Let \(X \in \mathcal{F}^{\rho}_{\mu}\) be a smooth well-formed Fano weighted complete intersection. If \(\dim(X) > 1\), then we can apply Lemma~\ref{lemma:WCI-fano-hyperplane-section} a sufficient number of times until either \(i_X = 1\), or \(\dim(X) = 1\). In the latter case \(X\) is either a projective line or a smooth conic by~\cite[Lemma~2.6]{przyjalkowski/automorphisms}, and the corresponding families can be obtained from the family corresponding to a point.

  Consequently, we can assume that \(I(\mathcal{F}^{\rho}_{\mu}) = 1\) by Corollary~\ref{corollary:WCI-fano-index}. If \(X\) is not amenable, then we have \(s_2(X) = \codim(X) = c\) by Corollary~\ref{corollary:WCI-amenability-necessary}. If \(X\) is amenable, we can apply Lemma~\ref{lemma:WCI-amenability-elimination} a sufficient number of times until we obtain \(s_2(\mathcal{F}) = \codim(X) = c\).
\end{proof}

\begin{corollary}\label{corollary:WCI-fano-structure}
  Any two different series or semi-series of smooth well-formed Fano weighted complete intersections do not intersect.
\end{corollary}

\begin{proof}
  Let \(\mathfrak{S}(\mathcal{F}')\) and \(\mathfrak{S}(\mathcal{F}'')\) be two series or semi-series generated by the families \(\mathcal{F}'\) and \(\mathcal{F}''\), respectively. Assume that there exists a family \(\mathcal{G} \in \Theta\) such that \(\mathcal{G} \in \mathfrak{S}(\mathcal{F}') \cap \mathfrak{S}(\mathcal{F}'')\). Then there exist numbers \(l',m,l'',m'' \in \mathbb{Z}_{\geqslant 0}\) such that \(\mathcal{G} = \mathcal{F}'^{(l')}_{(m')} = \mathcal{F}''^{(l'')}_{(m'')}\). Put \(\mathcal{F}' = \mathcal{F}^{\rho'}_{\mu'}\) and \(\mathcal{F}'' = \mathcal{F}^{\rho''}_{\mu''}\). Then we have the identities
  \[
    (1^{(l' + 2 m')}; \rho') = (1^{(l'' + 2 m'')}; \rho''), \quad
    (2^{(m')}; \mu') = (2^{(m'')}; \mu'').
  \]
  For any \(\mathcal{F} \in \Theta\) put \(I(\mathcal{F}) = \sum a_i - \sum d_j\) and \(s_2(\mathcal{F}) = \vert \{j \mid d_j > 2\} \vert\). We obtain the following identities:
  \[
    l' + I(\mathcal{F}') = l'' + I(\mathcal{F}''), \quad
    s_2(\mathcal{F}') = s_2(\mathcal{F}''), \quad m' + c' = m'' + c''.
  \]
  Lemma~\ref{lemma:WCI-fano-generator} implies that \(I(\mathcal{F}') = I(\mathcal{F}'') = 1\), \(s_2(\mathcal{F}') = c'\), and \(s_2(\mathcal{F}'') = c''\), hence \(l' = l''\), and \(m' = m''\). Consequently, we obtain that \(\mathcal{F}' = \mathcal{F}''\).
\end{proof}

\subsection{Bounds on the quadratic irregularity}\label{subsection:s2-bounds}

Our next step is to obtain the bound on the quadratic irregularity of a smooth well-formed Fano weighted complete intersection.

\begin{lemma}\label{lemma:CI-fano-variance-partition}
  Let \(X \subset \mathbb{P}^N\) be a smooth Fano complete intersection of multidegree \((d_1, \ldots, d_c)\) which is not an intersection with a linear cone. Then \((d_1 - 2, \ldots, d_c - 2)\) defines a weak partition of \(\var(X)\).
\end{lemma}

\begin{proof}
  We can assume that \(\dim(X) > 1\). Corollary~\ref{corollary:WCI-fano-index} implies that \(\var(X) = \sum_{j = 1}^c (d_j - 2)\). But \(X\) is not an intersection with a linear cone, hence \(d_j > 1\) for any \(j = 1, \ldots, c\).
\end{proof}

\begin{corollary}\label{corollary:CI-fano-s2-bound}
  Let \(X \subset \mathbb{P}^N\) be a smooth Fano complete intersection which is not an intersection with a linear cone. The following estimate holds: \(s_2(X) \leqslant \var(X)\).
\end{corollary}

\begin{lemma}\label{lemma:WCI-fano-variance-partition}
  Let \(X \subset \mathbb{P}(a_0, \ldots, a_N)\) be a smooth well-formed Fano weighted complete intersection of multidegree \((d_1, \ldots, d_c)\) which is not an intersection with a linear cone. Assume that \(\max_i(a_i) > 1\). Put
  \[
    \alpha_j = d_j - a_{\dim(X) + j} - 1, \quad j = 1, \ldots, c - 1; \quad
    \alpha_c = d_c - a_N - a_{\dim(X)} - 1.
  \]
  Then the following identity holds:
  \[
    \sum_{j = 1}^c \alpha_j = \sum_{i = c + i_X}^{\dim(X) - 1} a_i; \quad
    \alpha_j \geqslant 0, \quad j = 1, \ldots, c.
  \]
\end{lemma}

\begin{proof}
  Lemma~\ref{lemma:WCI-QS-WF-degrees-bound} implies that \(\alpha_j \geqslant 0\) for all \(j = 1, \ldots, c - 1\). Note that \(a_{\dim(X)} < a_N\): on the contrary, if \(a_{\dim(X)} = a_N\), then by Corollary~\ref{corollary:WCI-smooth-WF-combinatorics} at least \(c + 1\) degrees among \(d_1, \ldots, d_c\) is divisible by \(a_{\dim(X)}\), which is absurd. Then we obtain the inequality \(d_c \geqslant 2 a_N \geqslant a_N + a_{\dim(X)} + 1\) from Lemma~\ref{lemma:WCI-QS-WF-degrees-bound}. Moreover, Proposition~\ref{proposition:WCI-fano-CY-O1-dim-bound} implies that \(a_{c + i_X - 1} = 1\). Then the identity follows from Corollary~\ref{corollary:WCI-fano-index}.
\end{proof}

\begin{remark}\label{remark:WCI-fano-variance-partition}
  Let \(X \subset \mathbb{P}(\rho)\) be a smooth well-formed Fano weighted complete intersection which is not an intersection with a linear cone. Assume that \(\dim(\vert \mathcal{O}_X(1) \vert) \geqslant \dim(X)\), hence \(a_{\dim(X)} = 1\) by Lemma~\ref{lemma:WCI-QS-WF-O1-dimension}. Then Lemma~\ref{lemma:WCI-fano-variance-partition} states that the tuple \((\alpha_1, \ldots, \alpha_c)\) defines a weak composition of \(\var(X) - 1\). 
\end{remark}

\begin{lemma}\label{lemma:WCI-fano-O1-bound-variance}
  Let \(X \subset \mathbb{P}(a_0, \ldots, a_N)\) be a smooth well-formed Fano weighted complete intersection which is not an intersection with a linear cone. The following estimate holds: \(\dim(\vert \mathcal{O}_X(1) \vert) \geqslant N - 2 \var(X)\).
\end{lemma}

\begin{proof}
  Theorem~\ref{theorem:WCI-fano-small-variance} implies that we can always assume \(\var(X) > 0\). Consider the function \(W \colon \Theta \rightarrow \mathbb{Z}_{\geqslant 0}\) that maps a family \(\mathcal{G} \in \Theta\) of weighted complete intersections in \(\mathbb{P}(a'_0, \ldots, a'_{N'})\) of multidegree \((d_1, \ldots, d_c)\) to \(\vert \{i \mid a'_i > 1\} \vert\). Lemma~\ref{lemma:WCI-QS-WF-O1-dimension} implies that we can write \(W(\mathcal{G}) = N' - \dim(\vert \mathcal{O}_Y(1) \vert)\) for any smooth well-formed Fano weighted complete intersection \(Y \in \mathcal{G}\). Let us consider the partition \(\Theta = \bigsqcup \Theta^{(i, r)}\), where for any family \(\mathcal{G} \in \Theta^{(i, r)}\) and any smooth well-formed Fano weighted complete intersection \(Y \in \mathcal{G}\) we have \(\var(Y) = r\) and \(\sum a_i - \sum d_j = i\). By Proposition~\ref{proposition:WCI-fano-CY-O1-dim-bound} and Corollary~\ref{corollary:WCI-fano-index} the restriction of \(W\) to each subset \(\Theta^{(i, r)}\) has a unique maximum on the family \(\mathcal{F}_{(i, r)} \in \Theta^{(i, r)}\) of weighted complete intersection of multidegree \((6^{(r)})\) in \(\mathbb{P}(1^{(i + r)}, 2^{(r)}, 3^{(r)})\). It is clear that \(W(\mathcal{F}_{(i, r)}) = 2 r\). Then we obtain the bound \(W(\mathcal{F}) \leqslant 2 \var(X)\) for any family \(\mathcal{F} \in \Theta\) and any smooth well-formed Fano weighted complete intersection \(X \in \mathcal{F}\), so we are done.
\end{proof}

Now we derive Theorem~\ref{theorem:WCI-fano-s2-bound} from several partial cases.

\begin{lemma}\label{lemma:WCI-fano-s2-bound-exception}
  Let \(X \subset \mathbb{P}(a_0, \ldots, a_N)\) be a smooth well-formed Fano weighted complete intersection of multidegree \(\mu\) which is not an intersection with a linear cone. Assume that \(\max_i(a_i) = 2\). Then the following estimate holds: \(\var(X) \geqslant \codim(X)\).
\end{lemma}

\begin{proof}
  Note that the assumption \(\max_i(a_i) > 1\) implies \(\var(X) > 0\) by Theorem~\ref{theorem:WCI-fano-small-variance}. Put \(\mu = (d_1, \ldots, d_c)\). We have the identity \(\var(X) - \codim(X) = N + 1 - 3 c - i_X\) with \(c = \codim(X)\), hence by Corollary~\ref{corollary:WCI-fano-index} we can write
  \[
    \var(X) - \codim(X) = \sum_{j = 1}^c (d_j - 3) - \sum_{i = 0}^N (a_i - 1).
  \]
 
  By assumption we have \(d_j > 2\) for all \(j = 1, \ldots, c\), and \(a_i \leqslant 2\) for all \(i = 0, \ldots, N\). In other words,
  \[
    \sum_{j = 1}^c (d_j - 3) \geqslant 0, \quad \sum_{i = 0}^N (a_i - 1) = l, \quad
    l = \vert \{i \mid a_i = 2\} \vert.
  \]

  The pair \((\rho; \mu)\) is regular by Corollary~\ref{corollary:WCI-smooth-WF-combinatorics}, hence there exist at least \(l\) even numbers among \(d_j\). We have \(\sum_{j = 1}^c (d_j - 3) \geqslant l\), which is equivalent to \(\var(X) \geqslant \codim(X)\).
\end{proof}

\begin{lemma}\label{lemma:WCI-fano-s2-bound}
  Let \(X \subset \mathbb{P}(a_0, \ldots, a_N)\) be a smooth well-formed Fano weighted complete intersection which is not an intersection with a linear cone. Assume that \(\max_i(a_i) > 1\), and \(\dim(\vert \mathcal{O}_X(1) \vert) \geqslant \dim(X)\). The following estimate holds: \(s_2(X) \leqslant 3 \var(X) - 2\).
\end{lemma}

\begin{proof}
  Note that the assumption \(\max_i(a_i) > 1\) implies \(\var(X) > 0\) by Theorem~\ref{theorem:WCI-fano-small-variance}. Then we can assume that \(\codim(X) > \var(X)\). Actually, otherwise the result would be trivial:
  \[
    s_2(X) \leqslant \codim(X) \leqslant \var(X) = \min(\var(X), 3 \var (X) - 2).
  \]

  Throughout the proof we use the notation of Lemma~\ref{lemma:WCI-fano-variance-partition}.

  Note that by assumption and Lemma~\ref{lemma:WCI-QS-WF-O1-dimension} we have \(a_{\dim(X)} = 1\). Let \((d_1, \ldots, d_c)\) be multidegree of \(X\). We can also assume that \(\alpha_c > 0\). Actually, by Lemma~\ref{lemma:WCI-QS-WF-degrees-bound} we have \(d_c \geqslant 2 a_N\), i.e., \(a_N \leqslant a_{\dim(X)} + 1 + \alpha_c\). Then \(\alpha_c = 0\) implies that \(a_N = 2\), so in this case we can apply Lemma~\ref{lemma:WCI-fano-s2-bound-exception}. From Lemma~\ref{lemma:WCI-fano-variance-partition} we obtain
  \[
    c - s_2(X) = \vert \{j \mid d_j = 2 \} \vert =
    \vert \{j \mid a_{\dim(X) + j} = 1, \; \alpha_j = 0\} \vert.
  \]
  Put \(\mu = \vert \{j \mid \alpha_j = 0\} \vert\) and \(\nu = \vert \{j \mid a_{\dim(X) + j} > 1, \; \alpha_j = 0\} \vert\). Then we can write \(s_2(X) = c - \mu + \nu\).

  Let us first establish a lower bound on \(\mu\). As we already mentioned, \(a_{\dim(X)} = 1\), and \(\alpha_c > 0\). From Remark~\ref{remark:WCI-fano-variance-partition} and the Pigeonhole principle we obtain that
  \[
    \sum_{j = 1}^{c - 1} \alpha_j = \var(X) - 1 - \alpha_c, \quad \alpha_j \geqslant 0, \quad \alpha_c > 0; \quad
    \mu \geqslant c - 1 - \sum_{j = 1}^{c - 1} \alpha_j = c - \var(X) + \alpha_c.
  \]
  
  Now we can establish an upper bound on \(\nu\). By Lemma~\ref{lemma:WCI-fano-O1-bound-variance} we have \(\dim(\vert \mathcal{O}_X(1) \vert) \geqslant N - 2 \var(X)\), which is equivalent to the estimate \(2 \var(X) \geqslant \vert \{i \mid a_i > 1\} \vert\) by Lemma~\ref{lemma:WCI-QS-WF-O1-dimension}. Recall that \(\alpha_c > 0\), then it is clear that \(\vert \{i \mid a_i > 1\} \vert > \nu\). As a consequence, we obtain that \(\nu \leqslant 2 \var(X) - 1\).
\end{proof}

\begin{corollary}\label{corollary:WCI-fano-s2-bound}
  Let \(X \subset \mathbb{P}(\rho)\) be a smooth well-formed Fano weighted complete intersection which is not an intersection with a linear cone. Assume that \(\var(X) > 0\). The following estimate holds: \(s_2(X) \leqslant 3 \var(X) - 2\).
\end{corollary}

\begin{proof}
  If we assume \(\max(\rho) \leqslant 2\), then the statement follows from Corollary~\ref{corollary:CI-fano-s2-bound} and Lemma~\ref{lemma:WCI-fano-s2-bound-exception}. If we have \(\dim(\vert \mathcal{O}_X(1) \vert) < \dim(X)\), then by Lemma~\ref{lemma:WCI-fano-O1-bound-variance} we obtain the bound \(\codim(X) < 2 \var(X)\). At last, if we assume \(\dim(\vert \mathcal{O}_X(1) \vert) \geqslant \dim(X)\), and \(\max(\rho) > 1\), then we can use Lemma~\ref{lemma:WCI-fano-s2-bound}.
\end{proof}

\subsection{Bounds on the number of series and semi-series of given variance}\label{subsection:series-semiseries-bounds}

Finally, we prove that the number of series and semi-series can be explicitly bounded in terms of variance.

\begin{lemma}\label{lemma:CI-fano-series-cardinality}
  For any \(r \in \mathbb{Z}_{\geqslant 0}\) we have \(\vert \mathbb{S}^{\alpha}_r \vert = p(r)\), where \(p(r)\) is the number of integer partitions of \(r\).
\end{lemma}

\begin{proof}
  Follows from Lemma~\ref{lemma:CI-fano-variance-partition}.
\end{proof}

\begin{lemma}\label{corollary:WCI-fano-large-O1-variance-bound}
  Let \(X \subset \mathbb{P}(a_0, \ldots, a_N)\) be a smooth well-formed Fano weighted complete intersection of multidegree \((d_1, \ldots, d_c)\) which is not an intersection with a linear cone. Assume that \(\dim(\vert \mathcal{O}_X(1) \vert) \geqslant \dim(X)\). The following estimates hold: \(\max_i (a_i) \leqslant \var(X) + 1\), and \(\max_j (d_j) \leqslant 2 (\var(X) + 1)\).
\end{lemma}

\begin{proof}
  We may assume that \(\max(a_i) > 1\) (see Lemma~\ref{lemma:CI-fano-variance-partition}). From assumption and Lemma~\ref{lemma:WCI-QS-WF-O1-dimension} we obtain that \(a_{\dim(X)} = 1\). By Lemma~\ref{lemma:WCI-fano-variance-partition} we have \(d_c = a_N + a_{\dim(X)} + 1 + \alpha_c\), where \(0 \leqslant \alpha_c \leqslant \var(X) - 1\). Lemma~\ref{lemma:WCI-QS-WF-degrees-bound} implies that \(d_c \geqslant 2 a_N\), i.e., \(a_N \leqslant a_{\dim(X)} + 1 + \alpha_c\). Then we obtain the estimates
  \[
    a_N \leqslant a_{\dim(X)} + 1 + \alpha_c \leqslant \var(X) + 1, \quad
    d_c = a_N + a_{\dim(X)} + 1 + \alpha_c \leqslant 2 (\var(X) + 1). \qedhere
  \]
\end{proof}

\begin{notation}
  Let us denote by \(\mathbb{S}^{\beta}_r(c) \subset \mathbb{S}^{\beta}_r\) (respectively, by \(\mathbb{S}^{\gamma}_r(c) \subset \mathbb{S}^{\gamma}_r\)) the subset of series (respectively, of semi-series) of smooth well-formed Fano weighted complete intersections such that for any series \(\mathfrak{S} \in \mathbb{S}^{\beta}_r(c)\) (respectively, for any semi-series \(\mathfrak{S} \in \mathbb{S}^{\gamma}_r(c)\)) generated by a family \(\mathcal{F} \in \Theta\) we have \(\var(X) = r\) and \(\codim(X) = c\) for any smooth well-formed Fano weighted complete intersection \(X \in \mathcal{F}\).
\end{notation}

\begin{lemma}\label{lemma:WCI-fano-series-codim-bound}
  Put \(F_w(m) = \binom{m + w - 1}{m}\). The following estimate holds: \(\vert \mathbb{S}^{\beta}_r(c) \vert \leqslant F_r(c - 1) F_r(c)\).
\end{lemma}

\begin{proof}
  Let \(\mathfrak{S}(\mathcal{F}^{\rho}_{\mu}) \in \mathbb{S}^{\beta}_r(c)\) be a series, and \(X \in \mathcal{F}^{\rho}_{\mu}\) be a smooth well-formed Fano weighted complete intersection. Put \(\rho = (a_0, \ldots, a_N)\) and \(\mu = (d_1, \ldots, d_c)\). We have the following restrictions:
  \begin{itemize}
  \item \(a_i \neq 2\), and \(\vert \{i \mid a_i > 1 \} \vert \leqslant \codim(X) = c\) by Lemma~\ref{lemma:WCI-QS-WF-O1-dimension} and Remark~\ref{remark:WCI-amenable-O1-freeness};
  \item \(\sum a_i - \sum d_j = 1\), and \(s_2(X) = \codim(X) = c\) by Lemma~\ref{lemma:WCI-fano-generator};
  \item \(\max(a_0, \ldots, a_N) \leqslant r + 1\) by Lemma~\ref{corollary:WCI-fano-large-O1-variance-bound}.
  \end{itemize}

  Let us identify the family \(\mathcal{F}^{\rho}_{\mu}\) with the pair \((\rho; \mu)\). By assumption, Lemma~\ref{lemma:WCI-fano-variance-partition}, and Remark~\ref{remark:WCI-fano-variance-partition} we can uniquely represent the pair \((\rho; \mu)\) by tuples \((a_{\dim(X) + 1}, \ldots, a_N) \in \mathbb{Z}_{> 0}^c\) and \((\alpha_1, \ldots, \alpha_c) \in \mathbb{Z}_{\geqslant 0}^c\), where
  \begin{gather*}
    \alpha_j = d_j - a_{\dim(X) + j} - 1, \quad j = 1, \ldots, c - 1; \quad
    \alpha_c = d_c - a_N - 2, \\ \sum_{j = 1}^c \alpha_j = r - 1, \quad
    \dim(X) = \sum_{j = 1}^c (d_j - a_{\dim(X) + j}).
  \end{gather*}

  The number of possible tuples \((a_{\dim(X) + 1}, \ldots, a_N)\) is bounded by the number of non-decreasing sequences of numbers \(\{1, 3, \ldots, r + 1\}\) of length \(c\), which is equal to \(F_r(c)\). Moreover, the number of possible tuples \((\alpha_1, \ldots, \alpha_c)\) is bounded by the number of weak compositions of \(r - 1\) into \(c\) parts, which is equal to \(F_r(c - 1)\). Consequently, we obtain the required estimate \(\vert \mathbb{S}^{\beta}_r(c) \vert \leqslant F_r(c) F_r(c - 1)\).
\end{proof}

\begin{lemma}\label{lemma:WCI-fano-semiseries-codim-bound}
  Put \(F_w(m) = \binom{m + w - 1}{m}\). The following estimate holds:
  \(
  \vert \mathbb{S}_r^{\gamma}(c) \vert \leqslant F_{r + 2 c}(r + c) F_c((r - 1) (r + 2 c)).
  \)
\end{lemma}

\begin{proof}
  Let \(\mathfrak{S}(\mathcal{F}^{\rho}_{\mu}) \in \mathbb{S}^{\gamma}_r(c)\) be a semi-series, and \(X \in \mathcal{F}^{\rho}_{\mu}\) be a smooth well-formed Fano weighted complete intersection. Put \(\rho = (a_0, \ldots, a_N)\) and \(\mu = (d_1, \ldots, d_c)\). We have the following restrictions:
  \begin{itemize}
  \item \(\sum a_i - \sum d_j = 1\), and \(s_2(X) = \codim(X) = c\) by Lemma~\ref{lemma:WCI-fano-generator};
  \item \(\vert \{i \mid a_i > 1\} \leqslant \dim(X) = c + r\) by Lemma~\ref{lemma:WCI-QS-WF-O1-dimension} and Proposition~\ref{proposition:WCI-fano-CY-O1-dim-bound};
  \item \(\max(a_0, \ldots, a_N) \leqslant N = c + 2 r\) by~\cite[Theorem~1.1]{przyjalkowski/bounds}.
  \end{itemize}

  Let us identify the family \(\mathcal{F}^{\rho}_{\mu}\) with the pair \((\rho; \mu)\). Note that \((a_0, \ldots, a_N) \in \mathbb{Z}_{> 0}^{N + 1}\) is uniquely represented by the tuple \((a_{c + 1}, \ldots, a_N) \in \mathbb{Z}_{> 0}^{\dim(X)}\). The number of possible tuples \((a_{c + 1}, \ldots, a_N)\) is bounded by the number of non-decreasing sequences of numbers \(\{1, 2, \ldots, N\}\) of length \(r + c\), which is equal to \(F_{c + 2 r}(r + c)\). Moreover, for any fixed tuple \((a_{c + 1}, \ldots, a_N)\) by Lemma~\ref{lemma:WCI-fano-variance-partition} we can uniquely represent any multidegree \((d_1, \ldots, d_c)\) by the tuple \((\alpha_1, \ldots, \alpha_c) \in \mathbb{Z}_{\geqslant 0}^c\), where
  \[
    \alpha_j = d_j - a_{\dim(X) + j} - 1, \quad \alpha_c = d_c - a_N - a_{\dim(X)} - 1, \quad
    \sum_{j = 1}^c \alpha_j = \sum_{i = c + 1}^{\dim(X) - 1} a_i.
  \]
  Consequently, for any fixed tuple \((a_{c + 1}, \ldots, a_N)\) the number of possible multidegrees \((d_1, \ldots, d_c)\) is bounded by the number of weak compositions of \(\beta = \sum_{i = c + 1}^{\dim(X) - 1} a_i\) into \(c\) parts, which is equal to \(F_c(\beta)\). Note that we have \(\beta \leqslant (r - 1) N = (r - 1) (c + 2 r)\), so we can bound this number by \(F_c((r - 1) (c + 2 r))\). Consequently, we obtain the required estimate
  \(
  \vert \mathbb{S}^{\gamma}_r(c) \vert \leqslant F_{r + 2 c}(r + c) F_c((r - 1) (c + 2 r))
  \).
\end{proof}

\begin{corollary}\label{corollary:WCI-fano-series-semiseries-bound}
  Put \(F_w(m) = \binom{m + w - 1}{m}\). The following estimate hold:
  \[
    \vert \mathbb{S}^{\beta}_r \vert \leqslant \sum_{c = 1}^{3 r - 2} F_r(c - 1) F_r(c), \quad
    \vert \mathbb{S}^{\gamma}_r \vert \leqslant
    \sum_{c = 1}^{3 r - 2} F_{r + 2 c}(r + c) F_c((r - 1) (r + 2 c)).
  \]
\end{corollary}

\begin{proof}
  By Lemma~\ref{lemma:WCI-fano-generator} and Corollary~\ref{corollary:WCI-fano-s2-bound} we can write
  \(
  \mathbb{S}^{\beta}_r = \bigsqcup_{c = 1}^{3 r - 2} \mathbb{S}^{\beta}_r(c)
  \)
  and
  \(
  \mathbb{S}^{\gamma}_r = \bigsqcup_{c = 1}^{3 r - 2} \mathbb{S}^{\gamma}_r(c)
  \).
  Then the statement follows from Lemmas~\ref{lemma:WCI-fano-series-codim-bound} and~\ref{lemma:WCI-fano-semiseries-codim-bound}.
\end{proof} 

\section{Smooth well-formed Fano weighted complete intersections of anticanonical degree 1}\label{section:degree-one}

In this section we prove that smooth well-formed Fano weighted complete intersections of anticanonical degree one are non-amenable in the sense of Definition~\ref{definition:WCI-amenable}. Here we use definitions and notations of Section~\ref{section:preliminaries}.

\begin{notation}
  Let \(\rho = (a_0, \ldots, a_N)\) and \(\mu = (d_1, \ldots, d_m)\) be tuples of positive integers, and \(\nu_p\) be the \(p\)-adic valuation. For any prime \(p\) and \(n \in \mathbb{Z}_{\geqslant 0}\) we put \(I_p^n(\rho; \mu) = \{i \mid \nu_p(a_i) = n\}\) and \(J_p^n(\rho; \mu) = \{j \mid \nu_p(d_j) = n\}\).
\end{notation}

\begin{lemma}\label{lemma:regular-pair-degree-one}
  Let \(\rho = (a_0, \ldots, a_N)\) and \(\mu = (d_1, \ldots, d_m)\) be tuples of positive integers such that \((\rho; \mu)\) is a regular pair. The following assumptions are equivalent:
  \begin{itemize}
  \item \(\prod a_i = \prod d_j\);
  \item \(\vert I_p^n(\rho; \mu) \vert = \vert J_p^n(\rho; \mu) \vert\) for any prime \(p\) and \(n \in \mathbb{Z}_{> 0}\).
  \end{itemize}
\end{lemma}

\begin{proof}
  Assume that \(\prod a_i = \prod d_j\). The pair \((\rho; \mu)\) is regular, hence for any fixed prime \(p\) and \(n \in \mathbb{Z}_{> 0}\) we have the estimate \(\vert \{i \mid \nu_p(a_i) \geqslant n\} \vert \leqslant \vert \{j \mid \nu_p(d_j) \geqslant n\} \vert\), which is equivalent to the estimate
  \begin{equation}\label{equation:bijection-a}
    \sum_{s \geqslant n} \vert I_p^s(\rho; \mu) \vert \leqslant \sum_{s \geqslant n} \vert J_p^s(\rho; \mu) \vert.
  \end{equation}

  Put \(h_p = \max_s(\vert I_p^s(\rho; \mu) \vert > 0)\). Then we can rewrite (\ref{equation:bijection-a}) in the form
  \begin{equation}\label{equation:bijection-b}
    \sum_{s = n}^{h_p}
    \left (
      \vert J_p^s(\rho; \mu) \vert - \vert I_p^s(\rho; \mu) \vert
    \right ) +
    \sum_{s > h_p} \vert J_p^s(\rho; \mu) \vert \geqslant 0.
  \end{equation}
  
  Let us sum (\ref{equation:bijection-b}) over the free parameter \(n = 1, \ldots, h_p\). We obtain
  \begin{equation}\label{equation:bijection-c}
    \sum_{s = 1}^{h_p} s \left ( \vert J_p^s(\rho; \mu) \vert -
      \vert I_p^s(\rho; \mu) \vert \right ) +
    h_p \sum_{s > h_p} \vert J_p^s(\rho; \mu) \vert \geqslant 0.
  \end{equation}
  Note that \(\prod a_i = \prod d_j\) is equivalent to the identities
  \begin{equation}\label{equation:bijection-d}
    \sum_i \nu_p(a_i) = \sum_{s \geqslant 1} s \vert I_p^s(\rho; \mu) \vert =
    \sum_{s \geqslant 1} s \vert J_p^s(\rho; \mu) \vert = \sum_j \nu_p(d_j)
  \end{equation}
  for any prime \(p\). We can rewrite (\ref{equation:bijection-d}) as follows:
  \begin{equation}\label{equation:bijection-e}
    \left (
      \sum_{s = 1}^{h_p} s
      \left (
        \vert J_p^s(\rho; \mu) \vert - \vert I_p^s(\rho; \mu) \vert
      \right ) +
      h_p \sum_{s > h_p} \vert J_p^s(\rho; \mu) \vert
    \right ) +
    \sum_{s > h_p} (s - h_p) \vert J_p^s(\rho; \mu) \vert = 0.
  \end{equation}
  By comparing (\ref{equation:bijection-c}) and (\ref{equation:bijection-e}) we obtain that \(\vert J_p^s(\rho; \mu) \vert = 0\) for any \(s > h_p\). Then from (\ref{equation:bijection-b}) we derive that \(\vert I_p^s(\rho; \mu) \vert = \vert J_p^s(\rho; \mu) \vert\) for any \(s = 1, \ldots, h_p\).
  
  Now let us assume that \(\vert I_p^s(\rho; \mu) \vert = \vert J_p^s(\rho; \mu) \vert\) for any prime \(p\) and \(s \in \mathbb{Z}_{> 0}\). As we mentioned earlier, the identity \(\prod a_i = \prod d_j\) is equivalent to the identities (\ref{equation:bijection-d}), which completes the proof.
\end{proof}

\begin{notation}
  Denote by \(\mathcal{P}\) the set of regular pairs \((\rho; \mu) = (a_0, \ldots, a_N; d_1, \ldots, d_m)\) such that \(\prod a_i = \prod d_j\), and for any prime \(p\) and \(n \in \mathbb{Z}_{> 0}\) the subset \(I_p^n(\rho; \mu)\) is of type \((Q_1^+)\).
\end{notation}

\begin{lemma}\label{lemma:p-class-not-prime-power}
  Let \(\rho = (a_0, \ldots, a_N)\) and \(\mu = (d_1, \ldots, d_m)\) be tuples of positive integers. Assume that \((\rho; \mu) \in \mathcal{P}\), and \(a_i \neq d_j\) for any \(i = 0, \ldots, N\) and \(j = 1, \ldots, m\). Then \(d_j\) is not a prime power for any \(j = 1, \ldots, m\).
\end{lemma}

\begin{proof}
  Let us assume that \(d_j = p^n\) for some prime \(p\) and \(n \in \mathbb{Z}_{> 0}\). Then by Lemma~\ref{lemma:regular-pair-degree-one} and the definition of the set \(\mathcal{P}\) we would have \(a_i = p^n\) for some \(i = 0, \ldots, N\), so we obtain a contradiction.
\end{proof}

\begin{notation}
  Let \(\rho = (a_0, \ldots, a_N)\) and \(\mu = (d_1, \ldots, d_m)\) be tuples of positive integers such that the pair \((\rho; \mu)\) is regular, and \(p\) be a prime number. Then the pair \((\rho^p; \mu^p)\) is regular by~\cite[Lemma~4.5]{pizzato/nonvanishing}, where
  \[
    d_j^p = 
    \begin{cases}
      d_j / p & \text{if} \; p \mid d_j, \\
      d_j & \text{if} \; p \nmid d_j;
    \end{cases}
    \quad
    a_i^p = 
    \begin{cases}
      a_i / p & \text{if} \; p \mid a_i, \\
      a_i & \text{if} \; p \nmid a_i.
    \end{cases}
  \]
\end{notation}

\begin{notation}
  Let \(\rho = (a_0, \ldots, a_N)\) and \(\mu = (d_1, \ldots, d_m)\) be tuples of positive integers such that the pair \((\rho; \mu)\) is regular. We denote by \((\widetilde{\rho}; \widetilde{\mu})\) the unique regular pair obtained by removing all \(d_j\) and \(a_i\) such that \(d_j = a_i\) (see~\cite[Lemma~4.4]{pizzato/nonvanishing}).
\end{notation}

\begin{lemma}
  Let \(\rho = (a_0, \ldots, a_N)\) and \(\mu = (d_1, \ldots, d_m)\) be tuples of positive integers such that \((\rho; \mu)\) is a regular pair. Then \((\widetilde{\rho^p}; \widetilde{\mu^p}) \in \mathcal{P}\) for any prime number \(p\).
\end{lemma}

\begin{proof}
  The first property in the definition of the set \(\mathcal{P}\) is preserved by Lemma~\ref{lemma:regular-pair-degree-one}. The second property is also preserved, since for any prime number \(p\) and \(n \in \mathbb{Z}_{> 0}\) we have the identities
  \[
    I_p^n (\widetilde{\rho^p}; \widetilde{\mu^p}) =
    I_p^{n + 1} (\widetilde{\rho}; \widetilde{\mu}), \quad
    J_p^n (\widetilde{\rho^p}; \widetilde{\mu^p}) =
    J_p^{n + 1} (\widetilde{\rho}; \widetilde{\mu}). \qedhere
  \]
\end{proof}

\begin{lemma}\label{lemma:regular-pair-degree-one-bound}
  Let \(\rho = (a_0, \ldots, a_N)\) and \(\mu = (d_1, \ldots, d_m)\) be tuples of positive integers such that \((\rho; \mu) \in \mathcal{P}\) is a regular pair. Then the following estimate holds:
  \(
  \vert \{i \mid a_i > 1\} \vert \geqslant \vert \{j \mid d_j > 1\} \vert
  \),
  where we have the equality if and only if \((\widetilde{\rho}; \widetilde{\mu})\) is the empty pair.
\end{lemma}

\begin{proof}
  We prove the statement by induction on the number \(n\) of distinct prime divisors of \(\prod d_j = \prod a_i\). Put
  \[
    I = \{i \mid a_i > 1\} = \bigcup_{t = 1}^n I_{(p_t)}, \;
    J = \{j \mid d_j > 1\} = \bigcup_{t = 1}^n J_{(p_t)}; \;
    I_{(p_t)} = \{i \mid \nu_{p_t}(a_i) > 0\}, \;
    J_{(p_t)} = \{j \mid \nu_{p_t}(d_j) > 0\}.
  \]
  Throughout the proof we implicitly replace any regular pair \((\rho'; \mu')\) with \((\widetilde{\rho'}; \widetilde{\mu'})\) whenever it is convenient.

  The basis of induction is trivial: if \(n = 0\), then the pair \((\rho; \mu)\) is empty.

  Now let us assume that the statement holds for any \(k < n\). Put
  \[
    \widetilde{I} = \bigcup_{t = 1}^{n - 1} I_{(p_t)}, \quad
    \widetilde{J} = \bigcup_{t = 1}^{n - 1} J_{(p_t)}; \quad
    I^{\pr}_p = \{i \mid a_i = p^l, \; l \in \mathbb{Z}_{> 0}\}, \quad
    J^{\pr}_p = \{j \mid d_j = p^l, \; l \in \mathbb{Z}_{> 0}\}.
  \]
  Then we can write \(\vert I \vert = \vert \widetilde{I} \vert + \vert I^{\pr}_{p_n} \vert\) and \(\vert J \vert = \vert \widetilde{J} \vert + \vert J^{\pr}_{p_n} \vert\). Note that by Lemma~\ref{lemma:p-class-not-prime-power} we have \(J^{\pr}_{p_n} = \emptyset\).

  Let us replace the pair \((\rho; \mu)\) with the pair \((\rho^{p_n}; \mu^{p_n})\) at least \(\max_s(\vert I_p^s(\rho; \mu) \vert > 0)\) times. We denote the obtained pair by \((\rho^{p_n}_*; \mu^{p_n}_*)\). Now we can apply the induction hypothesis to \((\rho^{p_n}_*; \mu^{p_n}_*)\), hence \(\vert \widetilde{I} \vert \geqslant \vert \widetilde{J} \vert\). We have the estimate \(\vert I \vert - \vert J \vert = \vert \widetilde{I} \vert - \vert \widetilde{J} \vert + \vert I^{\pr}_{p_n} \vert \geqslant 0\). Moreover, the equality holds if and only if the pair \((\rho^{p_n}_*; \mu^{p_n}_*)\) is empty, and \(\vert I^{\pr}_{p_n} \vert = 0\). In other words, the pair \((\widetilde{\rho}; \widetilde{\mu})\) should be empty as well.
\end{proof}

\begin{proposition}\label{proposition:WCI-fano-degree-one}
  Let \(X \subset \mathbb{P}(\rho)\) be a smooth well-formed Fano weighted complete intersection of multidegree \((d_1, \ldots, d_c)\) which is not an intersection with a linear cone. Assume that \((-K_X)^{\dim(X)} = 1\).
  \begin{enumerate}
  \item The following estimate holds: \(\dim(\vert \mathcal{O}_X(1) \vert) < \dim(X)\).
  \item The degree \(d_j\) is not a prime power for all \(j = 1, \ldots, c\).
  \end{enumerate}
\end{proposition}

\begin{proof}
  Put \(\rho = (a_0, \ldots, a_N)\) and \(\mu = (d_1, \ldots, d_c)\). On the one hand, the pair \((\rho; \mu)\) is consistent and strictly regular by Proposition~\ref{corollary:WCI-smooth-WF-combinatorics}. On the other hand, by Corollary~\ref{corollary:WCI-fano-degree-one} we have the identity \(\prod d_j = \prod a_i\). Consequently, the pair \((\rho; \mu)\) lies in the set \(\mathcal{P}\). Note that \(\vert \{j \mid d_j > 1\} \vert = \codim(X)\) by assumption. Then Lemma~\ref{lemma:regular-pair-degree-one-bound} implies the bound \(\vert \{i \mid a_i > 1\} \vert > \codim(X)\), which is equivalent to first statement by Lemma~\ref{lemma:WCI-QS-WF-O1-dimension}. The second statement follows from Lemma~\ref{lemma:p-class-not-prime-power}.
\end{proof}

\begin{corollary}\label{corollary:WCI-fano-degree-one-finite}
  The number of families \(\mathcal{F} \in \Theta\) such that there exists a smooth well-formed Fano weighted complete intersection \(X \in \mathcal{F}\) of anticanonical degree one and given variance is finite. 
\end{corollary}

\begin{proof}
  Let \(\mathcal{F} \in \Theta\) be a family such that there exists a smooth well-formed Fano weighted complete intersection \(X \in \mathcal{F}\) with \((-K_X)^{\dim(X)} = 1\). Theorem~\ref{theorem:WCI-fano-degree-one} implies the bound \(\dim(\vert \mathcal{O}_X(1) \vert) < \dim(X)\), hence \(X\) is not amenable by Remark~\ref{remark:WCI-amenable-O1-freeness}. Note that \(i_X = 1\). Propositions~\ref{proposition:WCI-fano-indexes},~\ref{proposition:WCI-fano-structure}, and Remark~\ref{remark:WCI-fano-index} imply that \(\mathcal{F}\) can be identified with a generating family of a semi-series in \(\Theta\). Their number is finite by Theorem~\ref{theorem:WCI-fano-series-semiseries-bound}.
\end{proof}

\appendix

\section{Regular sequences of weighted homogeneous polynomials are generic}\label{section:regular-sequences}

Here we prove that regular sequences of weighted homogeneous polynomials of given weights and degrees form a Zariski-open subset in the space of all sequences of such weighted homogeneous polynomials.

\begin{remark}
  Throughout this section we continue to follow Notations~\ref{notation:polynomial-ring} and~\ref{notation:polynomial-sequences}.
\end{remark}

\begin{definition}
  For any tuples \(\rho\) and \(\mu\) of positive integers we refer to the map
  \[
    \mathcal{H}^{\rho}_{\mu} \colon R^{\rho}_{\mu} \rightarrow \mathbb{Z}[[T]], \quad
    F \mapsto \HS_{R^{\rho} / \langle F \rangle}(T),
  \]
  as the \emph{Hilbert series evaluation map}.
\end{definition}

\begin{notation}
  Let \(S', S'' \in \mathbb{Z}[[T]]\) be formal series. We denote by \(S' \preccurlyeq S''\) the coefficient-wise inequality.
\end{notation}

We are going to prove that for any tuples \(\rho\) and \(\mu\) of positive integers the map \(\mathcal{H}^{\rho}_{\mu}\) is upper-semicontinuous: we claim that the subset
\(
\{F \in R^{\rho}_{\mu} \mid \mathcal{H}^{\rho}_{\mu}(F) \preccurlyeq S\} \subset R^{\rho}_{\mu}
\)
is Zariski-open for any formal series \(S \in \im(\mathcal{H}^{\rho}_{\mu})\). As a consequence, we will obtain that regular sequences in \(R^{\rho}_{\mu}\) form a Zariski-open subset (possibly, empty).

\begin{example}
  By definition \(\HS_{R^{\rho}}(T)\) is the generating series of denumerants \(D(\rho; \bullet)(T)\) (see Lemma~\ref{lemma:denumerants-series}).
\end{example}

\begin{lemma}[{\cite[Theorem~8.20]{miller/algebra}}]\label{lemma:K-polynomial}
  Let \(\rho = (a_0, \ldots, a_N)\) be a tuple of positive integers, and \(F \in R^{\rho}_{\mu}\) be a sequence of weighted homogeneous polynomials. Then the Hilbert series of \(R^{\rho} / \langle F \rangle\) has the following form:
  \[
    \HS_{R^{\rho} / \langle F \rangle}(T) = \frac{N(T)}{\prod_{i = 0}^N (1 - T^{a_i})}, \quad
    N(T) \in \mathbb{Z}[T].
  \]
\end{lemma}

\begin{definition}[{\cite[Definition~8.21]{miller/algebra}}]
  Let \(F \in R^{\rho}_{\mu}\) be a sequence of weighted homogeneous polynomials. The polynomial \(N(t)\) from Lemma~\ref{lemma:K-polynomial} is called the \emph{K-polynomial} of \(R^{\rho} / \langle F \rangle\), which we denote by \(\mathcal{K}(R^{\rho} / \langle F \rangle)\).
\end{definition}

The degree of the K-polynomial \(\mathcal{K}(R^{\rho} / \langle F \rangle)\) is bounded for any sequence \(F \in R^{\rho}_{\mu}\).

\begin{lemma}\label{lemma:K-polynomial-degree-bound}
  Let \(\rho\) and \(\mu\) be tuples of positive integers. There exists a number \(C(\rho; \mu) \in \mathbb{Z}_{\geqslant 0}\) such that \(\deg (\mathcal{K}(R^{\rho} / \langle F \rangle)) \leqslant C(\rho; \mu)\) for any sequence \(F \in R^{\rho}_{\mu}\) of weighted homogeneous polynomials.
\end{lemma}

\begin{proof}
  Let \(F \in R^{\rho}_{\mu}\) be a sequence of weighted homogeneous polynomial. The K-polynomial of \(R^{\rho} / \langle F \rangle\) records the alternating sum of its graded Betti numbers (see~\cite[Proposition~8.23]{miller/algebra}). Moreover, by~\cite[Theorem~8.29]{miller/algebra} in order to bound the degree of K-polynomial we can always assume that \(F\) is a monomial sequence, and there is finite number of such sequences in \(R^{\rho}_{\mu}\).  
\end{proof}

Then we can effectively approximate a Hilbert series \(\mathcal{H}^{\rho}_{\mu}(F)\) by its appropriate truncation.

\begin{definition}\label{definition:hilbert-series-truncation}
  Let \(\rho\) and \(\mu\) be tuples of positive integers. Let \(C(\rho; \mu) \in \mathbb{Z}_{> 0}\) be the minimal number such that \(\deg (\mathcal{K}(R^{\rho} / \langle F \rangle)) \leqslant C(\rho; \mu)\) for any sequence \(F \in R^{\rho}_{\mu}\) of weighted homogeneous polynomials (which exists by Lemma~\ref{lemma:K-polynomial-degree-bound}). Then for any sequence \(F \in R^{\rho}_{\mu}\) of weighted homogeneous polynomials we define the \emph{truncation} of the associated Hilbert series \(\mathcal{H}^{\rho}_{\mu}(F) = \sum_{i = 0}^{\infty} s_i T^i\) as \(\widetilde{\mathcal{H}}^{\rho}_{\mu}(F) = \sum_{i = 0}^{C(\rho; \mu)} s_i T^i\).
\end{definition}

\begin{corollary}\label{corollary:hilbert-series-coincidence}
  Let \(\rho\) and \(\mu\) be tuples of positive integers. For any sequences \(F,G \in R^{\rho}_{\mu}\) the assumption
  \(
  \widetilde{\mathcal{H}}^{\rho}_{\mu}(F) =
  \widetilde{\mathcal{H}}^{\rho}_{\mu}(G)
  \)
  implies that \(\mathcal{H}^{\rho}_{\mu}(F) = \mathcal{H}^{\rho}_{\mu}(G)\).
\end{corollary}

\begin{proof}
  Lemma~\ref{lemma:K-polynomial} implies that there exist polynomials \(M(T),N(T) \in \mathbb{Z}[T]\) such that
  \[
    \HS_{R^{\rho} / \langle F \rangle}(T) = \frac{M(T)}{\prod_{i = 0}^N (1 - T^{a_i})}, \quad
    \HS_{R^{\rho} / \langle G \rangle}(T) = \frac{N(T)}{\prod_{i = 0}^N (1 - T^{a_i})}.
  \]
  Moreover, by Lemma~\ref{lemma:K-polynomial-degree-bound} there exists a number \(C(\rho; \mu) \in \mathbb{Z}_{\geqslant 0}\) such that we have \(\deg(M(T)) \leqslant C(\rho; \mu)\) and \(\deg(N(T)) \leqslant C(\rho; \mu)\). Then the statement follows from~\cite[Theorem~4.1.1]{stanley/combinatorics}.
\end{proof}

We can characterise the Hilbert series in terms of \emph{Macaulay matrices} (for example, see~\cite[\nopp 41.3]{mora/equation}).

\begin{definition}
  Let \(F \in R^{\rho}_{\mu}\) be a sequence of weighted homogeneous polynomials, and \(\preccurlyeq\) be a monomial ordering on \(R^{\rho}\). For any \(d \geqslant 0\) the \emph{Macaulay matrix} \(\Mac_{\preccurlyeq, d}(F)\) is the matrix over \(\Bbbk\)
  \begin{itemize}
  \item with \(D(\rho; d)\) columns, indexed by monomials of degree \(d\) and sorted by \emph{decreasing} ordering with respect to \(\preccurlyeq\);
  \item with \(\sum_{j = 1}^m D(\rho; d_j)\) rows, indexed by pairs \((j, \mu)\) for \(j = 1, \ldots, m\) and monomials \(\mu\) of degree \(d - d_j\); the indexes are sorted by \emph{increasing} \(j\) first, and then by \emph{decreasing} \(\mu\);
  \item such that the matrix coefficient \(\Mac_{\preccurlyeq, d}(F)_{(j, \mu), \mu'}\) is equal to the coefficient of the monomial \(\mu'\) in the polynomial \(\mu f_j\) for any monomials \(\mu\) and \(\mu'\) of degree \(d\) and \(d - d_j\), respectively.
  \end{itemize}
\end{definition}

\begin{definition}
  Let \(F \in R^{\rho}_{\mu}\) be a sequence of weighted homogeneous polynomials, and \(\preccurlyeq\) be any monomial ordering on \(R^{\rho}\). For any \(d \in \mathbb{Z}_{\geqslant 0}\) the \emph{corank} of the Macaulay matrix \(\Mac_{\preccurlyeq, d}(F)\) is defined as
  \(
  \corank(\Mac_d(F)) = D(\rho; d) - \rank(\Mac_{\preccurlyeq, d}(F))
  \).
\end{definition}

\begin{lemma}
  Let \(F \in R^{\rho}_{\mu}\) be a sequence of weighted homogeneous polynomials. Then the Hilbert series of \(R^{\rho} / \langle F \rangle\) coincides with the following generating series:
  \(
    \HS_{R^{\rho} / \langle F \rangle}(T) = \sum_{d = 0}^{\infty} \corank(\Mac_d(F)) T^d
  \).
\end{lemma}

\begin{proof}
  For any \(d \in \mathbb{Z}_{\geqslant 0}\) we can identify the \(d\)-graded component \((R^{\rho} / \langle F \rangle)_d\) of \(R^{\rho} / \langle F \rangle\) with the quotient space of the \(d\)-graded component \(R^{\rho}_d\) modulo the relations described by the Macaulay matrix \(\Mac_d(F)\). In other words, we have the identity
  \(
  \dim((R^{\rho} / \langle F \rangle)_d) = \dim(R^{\rho}_d) - \rank(\Mac_d(F)) =
  D(\rho; d) - \rank(\Mac_d(F))
  \).
\end{proof}

\begin{lemma}\label{lemma:hilbert-series-weak-semicontinuity}
  Let \(\rho\) and \(\mu\) be tuples of positive integers. The function
  \[
    \dim((R^{\rho} / \langle \bullet \rangle)_d) \colon R^{\rho}_{\mu} \rightarrow \mathbb{Z}_{\geqslant 0}, \quad
    F \mapsto \dim((R^{\rho} / \langle F \rangle)_d),
  \]
  is upper-semicontinuous for any \(d \in \mathbb{Z}_{\geqslant 0}\).
\end{lemma}

\begin{proof}
  By the definition of a Macaulay matrix we have the linear map
  \[
    \Mac_d \colon R^{\rho}_{\mu} \rightarrow \Mat_{\Bbbk}
    \left (
      \sum_{j = 1}^m D(\rho; d - d_j), D(\rho; d)
    \right ).
  \]
  The rank function is lower-semicontinuous on this matrix space, and hence it is lower-semicontinuous on the subspace \(\Mac_d(R^{\rho}_{\mu})\), and we are done.
\end{proof}

\begin{lemma}\label{lemma:hilbert-series-semicontinuity}
  Let \(\rho\) and \(\mu\) be tuples of positive integers. For any sequence \(G \in R^{\rho}_{\mu}\) of weighted homogeneous polynomials the following subset is Zariski-open: \(W^{\rho}_{\mu}(G) = \{F \in R^{\rho}_{\mu} \mid \mathcal{H}^{\rho}_{\mu}(F) \preccurlyeq \mathcal{H}^{\rho}_{\mu}(G)\}\).
\end{lemma}

\begin{remark}
  Note that we always have \(G \in W^{\rho}_{\mu}(G)\), so \(W^{\rho}_{\mu}(G)\) is not empty.
\end{remark}

\begin{proof}
   Lemma~\ref{lemma:hilbert-series-weak-semicontinuity} implies that \(W^{\rho}_{\mu}(G)\) is a countable intersection of Zariski-open subsets: 
  \[
    W^{\rho}_{\mu}(G) = \{F \in R^{\rho}_{\mu} \mid \widetilde{\mathcal{H}}^{\rho}_{\mu}(F)
    \preccurlyeq \widetilde{\mathcal{H}}^{\rho}_{\mu}(G)\} \cap
    \bigcap_{d = C(\rho; \mu) + 1}^{\infty} \{F \in R^{\rho}_{\mu} \mid
    \dim((R^{\rho} / \langle F \rangle)_d) \leqslant s_d \}, \quad
    \mathcal{H}^{\rho}_{\mu}(G) = \sum_{i = 0}^{\infty} s_i T^i.
  \]
  where \(\widetilde{\mathcal{H}}^{\rho}_{\mu}\) is the truncation (see Definition~\ref{definition:hilbert-series-truncation}). The first subset is Zariski-open and can be presented as
  \[
    \{F \in R^{\rho}_{\mu} \mid \widetilde{\mathcal{H}}^{\rho}_{\mu}(F) \preccurlyeq
    \widetilde{\mathcal{H}}^{\rho}_{\mu}(G)\} =
    \bigcup_{\widetilde{S} \in \widetilde{\Lambda}}
    \{F \in R^{\rho}_{\mu} \mid \widetilde{\mathcal{H}}^{\rho}_{\mu}(F)
    \preccurlyeq \widetilde{S}\}, \quad
    \widetilde{\Lambda} = \{\widetilde{\mathcal{H}}^{\rho}_{\mu}(F) \mid
    \widetilde{\mathcal{H}}^{\rho}_{\mu}(F) \preccurlyeq
    \widetilde{\mathcal{H}}^{\rho}_{\mu}(G)\}.
  \]
  Note that the set \(\widetilde{\Lambda}\) is finite by construction. Moreover, Corollary~\ref{corollary:hilbert-series-coincidence} implies that any element \(\widetilde{\mathcal{H}}^{\rho}_{\mu}(F) \in \widetilde{\Lambda}\) lifts to a unique Hilbert series \(\mathcal{H}^{\rho}_{\mu}(F)\). In other words, we can identify the finite set \(\widetilde{\Lambda}\) with the following set:
  \[
    \Lambda = \{\mathcal{H}^{\rho}_{\mu}(F) \mid \widetilde{\mathcal{H}}^{\rho}_{\mu}(F)
    \preccurlyeq \widetilde{\mathcal{H}}^{\rho}_{\mu}(G)\}, \quad
    \vert \Lambda \vert = \vert \widetilde{\Lambda} \vert.
  \]
 
  In other words, we can write \(\{F \in R^{\rho}_{\mu} \mid \widetilde{\mathcal{H}}^{\rho}_{\mu}(F) \preccurlyeq \widetilde{\mathcal{H}}^{\rho}_{\mu}(G)\}\) as a finite union of Zariski-open subsets:
  \[
    \{F \in R^{\rho}_{\mu} \mid \widetilde{\mathcal{H}}^{\rho}_{\mu}(F) \preccurlyeq
    \widetilde{\mathcal{H}}^{\rho}_{\mu}(G)\} =
    \bigcup_{S \in \Lambda} \{F \in R^{\rho}_{\mu} \mid \mathcal{H}^{\rho}_{\mu}(F) \preccurlyeq S\} =
    \bigcup_{\widetilde{S} \in \widetilde{\Lambda}}
    \{F \in R^{\rho}_{\mu} \mid \widetilde{\mathcal{H}}^{\rho}_{\mu}(F)
    \preccurlyeq \widetilde{S}\}.
  \]  
  As a consequence, we can present \(W^{\rho}_{\mu}(G)\) as a finite union of Zariski-open subsets as well:
  \[
    W^{\rho}_{\mu}(G) =
    \bigcup_{S \in \Lambda'} \{F \in R^{\rho}_{\mu} \mid \mathcal{H}^{\rho}_{\mu}(F) \preccurlyeq S\}, \quad
    \Lambda' = \{\mathcal{H}^{\rho}_{\mu}(F) \mid \mathcal{H}^{\rho}_{\mu}(F)
    \preccurlyeq \mathcal{H}^{\rho}_{\mu}(G)\} \subset \Lambda. \qedhere
  \]
\end{proof}

\begin{definition}
  Let \(\rho\) and \(\mu\) be tuples of positive integers, and \(M^{\rho}_{\mu} \in \im(\widetilde{\mathcal{H}}^{\rho}_{\mu})\) be the unique polynomial such that \(M^{\rho}_{\mu} \preccurlyeq \widetilde{\mathcal{H}}^{\rho}_{\mu}(F)\) for any \(F \in R^{\rho}_{\mu}\) (which exists by Lemma~\ref{lemma:hilbert-series-weak-semicontinuity}). Its extension \(\MHS^{\rho}_{\mu} \in \im(\mathcal{H}^{\rho}_{\mu})\) (which is unique by Corollary~\ref{corollary:hilbert-series-coincidence}) will be referred to as the \emph{minimal Hilbert series}.
\end{definition}

\begin{corollary}\label{corollary:minimal-hilbert-series-generic}
  Let \(\rho\) and \(\mu\) be tuples of positive integers. Then sequences of weighted homogeneous polynomials in \(R^{\rho}_{\mu}\) with the associated minimal Hilbert series
  \(
    \{F \in R^{\rho}_{\mu} \mid \HS_{R^{\rho} / \langle F \rangle} (T) = \MHS^{\rho}_{\mu}(T)\} \subset R^{\rho}_{\mu}
  \)
  form a non-empty Zariski-open subset of \(R^{\rho}_{\mu}\).
\end{corollary}

\begin{lemma}
  Let \(F \in R^{\rho}_{\mu}\) be a sequence of weighted homogeneous polynomials, and \(f \in R^{\rho} / \langle F \rangle\) be a homogeneous element of degree \(d\). The following inequality holds:
  \(
    \HS_{R^{\rho} / \langle F, f \rangle}(T) \succcurlyeq (1 - T^d) \HS_{R^{\rho} / \langle F \rangle}(T)
  \),
  where the equality holds if and only if \(f \in R^{\rho} / \langle F \rangle\) is not a zero divisor.
\end{lemma}

\begin{proof}
  We have the following exact sequence of graded \(R^{\rho}\)-modules: 
  \[
    0 \rightarrow K \rightarrow R^{\rho} / \langle F \rangle \xrightarrow{\cdot f} R^{\rho} / \langle F \rangle \rightarrow R^{\rho} / \langle F, f \rangle \rightarrow 0,
  \]
  where \(K\) is the kernel of the multiplication map. It induces exact sequences of homogeneous components
  \[
    0 \rightarrow K_u \rightarrow (R^{\rho} / \langle F \rangle)_u \xrightarrow{\cdot f} (R^{\rho} / \langle F \rangle)_{u + d} \rightarrow
    (R^{\rho} / \langle F, f \rangle)_{u + d} \rightarrow 0, \quad u \in \mathbb{Z}_{\geqslant 0}.
  \]
  Consequently, for any \(u \in \mathbb{Z}_{\geqslant 0}\) we have the identity
  \[
    \dim((R^{\rho} / \langle F \rangle)_{u + d}) - \dim((R^{\rho} / \langle F \rangle)_u) =
    \dim((R^{\rho} / \langle F, f \rangle)_{u + d}) - \dim(K_u).
  \]
  Let us multiply both sides by \(T^{d + u}\), and summarise over \(\mathbb{Z}_{\geqslant 0}\). We obtain that
  \[ 
    \HS_{R^{\rho} / \langle F, f \rangle}(T)  = (1 - T^d) \HS_{R^{\rho} / \langle F \rangle}(T) +
    \left ( \sum_{u = 0}^{\infty} \dim(K_u) T^u \right ) \cdot T^d. \qedhere
  \]
\end{proof}

\begin{corollary}\label{corollary:hilbert-series-regular-sequence}
  Let \(\rho = (a_0, \ldots, a_N)\) and \(\mu = (d_1, \ldots, d_m)\) be tuples of positive integers, where \(m \leqslant N + 1\), and \(F \in R^{\rho}_{\mu}\) be a sequence of weighted homogeneous polynomials. Then \(F\) is a regular sequence if and only
  \[
    \HS_{R^{\rho} / \langle F \rangle}(T) =
    \frac{\prod_{j = 1}^m (1 - T^{d_j})}{\prod_{i = 0}^N (1 - T^{a_i})} = \MHS^{\rho}_{\mu}(T).
  \]
\end{corollary}

\begin{corollary}\label{corollary:regular-sequences-generic}
  Regular sequences in \(R^{\rho}_{\mu}\) form a Zariski-open subset of \(R^{\rho}_{\mu}\) (possibly, empty).
\end{corollary}

\begin{proof}
  Follows from Corollaries~\ref{corollary:minimal-hilbert-series-generic} and~\ref{corollary:hilbert-series-regular-sequence}.
\end{proof}

\begin{corollary}\label{corollary:height-lower-bound-generic}
  Let \(\rho\) and \(\mu\) be tuples of positive integers. For any \(n \in \mathbb{Z}_{> 0}\) the following subset is Zariski-open: \(U^{\rho}_{\mu}(n) = \{F \in R^{\rho}_{\mu} \mid \height(\langle F \rangle) \geqslant n\}\). 
\end{corollary}

\begin{proof}
  Put \(\mu = (d_1, \ldots, d_m)\), and let \(F = (f_1, \ldots, f_m) \in R^{\rho}_{\mu}\) be any sequence. By~\cite[Proposition~1.5.11]{bruns/cohenmacaulay} we have \(\height(\langle F \rangle) \geqslant n\) if and only if there exist a tuple \(\tau = (b_1, \ldots, b_n)\) of positive integers and a regular sequence \(G = (g_1, \ldots, g_n) \in R^{\rho}_{\tau}\) such that \(g_k \in \langle F \rangle\) for any \(k = 1, \ldots, n\). In other words, there should exist an \((m, n)\)-matrix \(H\) such that its elements \(H_{j, k} \in R^{\rho}\) are homogeneous of degree \(b_k - d_j\), and we have \(H \cdot F = G\).

  For any tuple \(\tau\) let us denote by \(M^{\rho}_{\mu, \tau}\) the space of all \((m, n)\)-matrices \(H\) such that their elements \(H_{j, k} \in R^{\rho}\) are homogeneous of degree \(b_k - d_j\). We have the natural map \(\Phi^{\rho}_{\mu, \tau} \colon M^{\rho}_{\mu, \tau} \times R^{\rho}_{\mu} \rightarrow R^{\rho}_{\tau}\), \((H, F) \mapsto H \cdot F = G\). Let \(U^{\rho}_{\tau} \subset R^{\rho}_{\tau}\) be the subset of regular sequences, which is Zariski-open by Corollary~\ref{corollary:regular-sequences-generic}. Then \((\Phi^{\rho}_{\mu, \tau})^{-1}(U^{\rho}_{\tau})\) is Zariski-open in \(M^{\rho}_{\mu, \tau} \times R^{\rho}_{\mu}\). Since the projection \(\pi \colon M^{\rho}_{\mu, \tau} \times R^{\rho}_{\mu} \rightarrow R^{\rho}_{\mu}\) is an open map, the subset \(W^{\rho}_{\mu}(\tau) = \pi((\Phi^{\rho}_{\mu, \tau})^{-1}(U^{\rho}_{\tau}))\) is also Zariski-open. Then \(U^{\rho}_{\mu}(n) = \cup_{\tau} W^{\rho}_{\mu}(\tau)\) is Zariski-open, and we are done.
\end{proof}

\section{List of series and semi-series of small variance}\label{section:small-variance}

In this section we provide the list of all generating families \(\mathcal{F} \in \Theta\) of smooth well-formed Fano weighted complete intersections of variance up to 4.

The classification of smooth well-formed Fano weighted complete intersections of given variance can be reduced to brute force. Main restrictions on weights and degrees in terms of variance are described in the proof of Lemmas~\ref{lemma:WCI-fano-series-codim-bound} and~\ref{lemma:WCI-fano-semiseries-codim-bound}. We also use restrictions provided by Lemma~\ref{lemma:WCI-QS-WF-degrees-bound} and Corollaries~\ref{corollary:WCI-fano-index} and~\ref{corollary:WCI-smooth-WF-combinatorics}. We have written an implementation of this brute force algorithm in SAGE (for any variance), which can be found at \url{https://github.com/MikhailOvcharenko/fano-WCI}. Note that all these restrictions are only necessary, so we should also manually check that obtained candidates for generating families are actually families of smooth well-formed Fano weighted complete intersections (see Subsection~\ref{subsection:WCI-basics} for details).

The numeration of families here is lexicographic with respect to variance, dimension, anticanonical degree, dimension of the anticanonical linear system, weights, and degrees, respectively. Note that all families \(\mathcal{F}\) of weighted complete intersections listed below are non-empty (see Lemma~\ref{lemma:WCI-family-nonempty}), and their general elements \(X \in \mathcal{F}\) are smooth, well-formed and Fano (see Corollary~\ref{corollary:WCI-smooth-WF-criterion} and Proposition~\ref{proposition:WCI-QS-WF-adjunction}). The last column shows whether \(X\) is (strictly) amenable in the sense of Definition~\ref{definition:WCI-amenable}.

\begin{longtable}{ccccccc}
  \toprule
  No. & \(\dim(X)\) & \((-K_X)^{\dim(X)}\) & \(h^0(X, -K_X)\) & \(\mathbb{P}(\rho)\) & Degrees & Amenability \\ \midrule \endhead
  0.1 & \(0\) & \(-\) & \(-\) & \(\pt\) & \(-\) & Strict \\
  \midrule  
  1.1 & \(2\) & \(1\) & \(2\) & \(\mathbb{P}(1^{(2)}, 2, 3)\) & \((6)\) & No \\
  1.2 & \(2\) & \(2\) & \(3\) & \(\mathbb{P}(1^{(3)}, 2)\) & \((4)\) & Non-strict \\
  1.3 & \(2\) & \(3\) & \(4\) & \(\mathbb{P}^3\) & \((3)\) & Strict \\
  \midrule
  2.1 & \(3\) & \(2\) & \(4\) & \(\mathbb{P}(1^{(4)}, 3)\) & \((6)\) & Strict \\
  2.2 & \(3\) & \(4\) & \(5\) & \(\mathbb{P}^4\) & \((4)\) & Strict \\
  2.3 & \(4\) & \(1\) & \(3\) & \(\mathbb{P}(1^{(3)}, 2^{(2)}, 3^{(2)})\) & \((6^{(2)})\) & No \\
  2.4 & \(4\) & \(2\) & \(4\) & \(\mathbb{P}(1^{(4)}, 2^{(2)}, 3)\) & \((4, 6)\) & No \\
  2.5 & \(4\) & \(4\) & \(5\) & \(\mathbb{P}(1^{(5)}, 2^{(2)})\) & \((4^{(2)})\) & Non-strict \\
  2.6 & \(4\) & \(6\) & \(6\) & \(\mathbb{P}(1^{(6)}, 2)\) & \((3, 4)\) & Non-strict \\
  2.7 & \(4\) & \(9\) & \(7\) & \(\mathbb{P}^6\) & \((3, 3)\) & Strict \\
  \midrule
  3.1 & \(4\) & \(1\) & \(4\) & \(\mathbb{P}(1^{(4)}, 2, 5)\) & \((10)\) & No \\
  3.2 & \(4\) & \(2\) & \(5\) & \(\mathbb{P}(1^{(5)}, 4)\) & \((8)\) & Strict \\
  3.3 & \(4\) & \(3\) & \(5\) & \(\mathbb{P}(1^{(5)}, 2)\) & \((6)\) & Non-strict \\
  3.4 & \(4\) & \(5\) & \(6\) & \(\mathbb{P}^5\) & \((5)\) & Strict \\
  3.5 & \(5\) & \(2\) & \(5\) & \(\mathbb{P}(1^{(5)}, 2, 3^{(2)})\) & \((6^{(2)})\) & No \\
  3.6 & \(5\) & \(4\) & \(6\) & \(\mathbb{P}(1^{(6)}, 2, 3)\) & \((4, 6)\) & Non-strict \\
  3.7 & \(5\) & \(8\) & \(7\) & \(\mathbb{P}(1^{(7)}, 2)\) & \((4^{(2)})\) & Non-strict \\
  3.8 & \(5\) & \(12\) & \(8\) & \(\mathbb{P}^7\) & \((3, 4)\) & Strict \\
  3.9 & \(6\) & \(1\) & \(4\) & \(\mathbb{P}(1^{(4)}, 2^{(3)}, 3^{(3)})\) & \((6^{(3)})\) & No \\
  3.10 & \(6\) & \(2\) & \(5\) & \(\mathbb{P}(1^{(5)}, 2^{(3)}, 3^{(2)})\) & \((4, 6^{(2)})\) & No \\
  3.11 & \(6\) & \(4\) & \(6\) & \(\mathbb{P}(1^{(6)}, 2^{(3)}, 3)\) & \((4^{(2)}, 6)\) & No \\
  3.12 & \(6\) & \(8\) & \(7\) & \(\mathbb{P}(1^{(7)}, 2^{(3)})\) & \((4^{(3)})\) & Non-strict \\
  3.13 & \(6\) & \(12\) & \(8\) & \(\mathbb{P}(1^{(8)}, 2^{(2)})\) & \((3, 4^{(2)})\) & Non-strict \\
  3.14 & \(6\) & \(18\) & \(9\) & \(\mathbb{P}(1^{(9)}, 2)\) & \((3^{(2)}, 4)\) & Non-strict \\
  3.15 & \(6\) & \(27\) & \(10\) & \(\mathbb{P}^9\) & \((3^{(3)})\) & Strict \\
  \midrule
  4.1 & \(5\) & \(2\) & \(6\) & \(\mathbb{P}(1^{(6)}, 5)\) & \((10)\) & Strict \\
  4.2 & \(5\) & \(6\) & \(7\) & \(\mathbb{P}^6\) & \((6)\) & Strict \\
  4.3 & \(6\) & \(1\) & \(5\) & \(\mathbb{P}(1^{(5)}, 2^{(2)}, 3, 5)\) & \((6, 10)\) & No \\
  4.4 & \(6\) & \(2\) & \(6\) & \(\mathbb{P}(1^{(6)}, 2, 3, 4)\) & \((6, 8)\) & No \\
  4.5 & \(6\) & \(2\) & \(6\) & \(\mathbb{P}(1^{(6)}, 2^{(2)}, 5)\) & \((4, 10)\) & No \\
  4.6 & \(6\) & \(3\) & \(6\) & \(\mathbb{P}(1^{(6)}, 2^{(2)}, 3)\) & \((6^{(2)})\) & No \\
  4.7 & \(6\) & \(3\) & \(7\) & \(\mathbb{P}(1^{(7)}, 2, 5)\) & \((3, 10)\) & No \\
  4.8 & \(6\) & \(4\) & \(7\) & \(\mathbb{P}(1^{(7)}, 3^{(2)})\) & \((6^{(2)})\) & Strict \\
  4.9 & \(6\) & \(5\) & \(7\) & \(\mathbb{P}(1^{(7)}, 2, 3)\) & \((5, 6)\) & Non-strict \\
  4.10 & \(6\) & \(6\) & \(7\) & \(\mathbb{P}(1^{(7)}, 2^{(2)})\) & \((4, 6)\) & Non-strict \\
  4.11 & \(6\) & \(6\) & \(8\) & \(\mathbb{P}(1^{(8)}, 4)\) & \((3, 8)\) & Strict \\
  4.12 & \(6\) & \(6\) & \(8\) & \(\mathbb{P}(1^{(8)}, 2)\) & \((3, 6)\) & Non-strict \\
  4.13 & \(6\) & \(8\) & \(8\) & \(\mathbb{P}(1^{(8)}, 3)\) & \((4, 6)\) & Strict \\
  4.14 & \(6\) & \(10\) & \(8\) & \(\mathbb{P}(1^{(8)}, 2)\) & \((4, 5)\) & Non-strict \\
  4.15 & \(6\) & \(15\) & \(8\) & \(\mathbb{P}^8\) & \((3, 5)\) & Strict \\
  4.16 & \(6\) & \(16\) & \(8\) & \(\mathbb{P}^8\) & \((4^{(2)})\) & Strict \\
  4.17 & \(7\) & \(2\) & \(6\) & \(\mathbb{P}(1^{(6)}, 2^{(2)}, 3^{(3)})\) & \((6^{(3)})\) & No \\
  4.18 & \(7\) & \(4\) & \(7\) & \(\mathbb{P}(1^{(7)}, 2^{(2)}, 3^{(2)})\) & \((4, 6^{(2)})\) & No \\
  4.19 & \(7\) & \(8\) & \(8\) & \(\mathbb{P}(1^{(8)}, 2^{(2)}, 3)\) & \((4^{(2)}, 6)\) & Non-strict \\
  4.20 & \(7\) & \(16\) & \(9\) & \(\mathbb{P}(1^{(9)}, 2^{(2)})\) & \((4^{(3)})\) & Non-strict \\
  4.21 & \(7\) & \(24\) & \(10\) & \(\mathbb{P}(1^{{(10)}}, 2)\) & \((3, 4^{(2)})\) & Non-strict \\
  4.22 & \(7\) & \(36\) & \(10\) & \(\mathbb{P}^{10}\) & \((3^{(2)}, 4)\) & Strict \\
  4.23 & \(8\) & \(1\) & \(5\) & \(\mathbb{P}(1^{(5)}, 2^{(4)}, 3^{(4)})\) & \((6^{(4)})\) & No \\
  4.24 & \(8\) & \(2\) & \(6\) & \(\mathbb{P}(1^{(6)}, 2^{(4)}, 3^{(3)})\) & \((4, 6^{(3)})\) & No \\
  4.25 & \(8\) & \(4\) & \(7\) & \(\mathbb{P}(1^{(7)}, 2^{(4)}, 3^{(2)})\) & \((4^{(2)}, 6^{(2)})\) & No \\
  4.26 & \(8\) & \(8\) & \(8\) & \(\mathbb{P}(1^{(8)}, 2^{(4)}, 3)\) & \((4^{(3)}, 6)\) & No \\
  4.27 & \(8\) & \(16\) & \(9\) & \(\mathbb{P}(1^{(9)}, 2^{(4)})\) & \((4^{(4)})\) & Non-strict \\
  4.28 & \(8\) & \(24\) & \(10\) & \(\mathbb{P}(1^{(10)}, 2^{(3)})\) & \((3, 4^{(3)})\) & Non-strict \\
  4.29 & \(8\) & \(36\) & \(11\) & \(\mathbb{P}(1^{(11)}, 2^{(2)})\) & \((3^{(2)}, 4^{(2)})\) & Non-strict \\
  4.30 & \(8\) & \(54\) & \(12\) & \(\mathbb{P}(1^{(12)}, 2)\) & \((3^{(3)}, 4)\) & Non-strict \\
  4.31 & \(8\) & \(81\) & \(13\) & \(\mathbb{P}^{12}\) & \((3^{(4)})\) & Strict  \\
  \bottomrule
  \caption{List of series and semi-series of smooth well-formed Fano weighted complete intersection of variance up to 4}
\end{longtable}

\clearpage
\printbibliography

@book{bruns/cohenmacaulay,
  author={Winfried Bruns and J{\"u}rgen Herzog},
  title={Cohen--Macaulay rings},
  series={Cambridge Studies in Advanced Mathematics},
  number={39},
  publisher={Cambridge University Press},
  year={1998}}

@article{chen/quasismooth,
  author={Jheng-Jie Chen and Jungkai Chen and Meng Chen},
  title={On quasismooth weighted complete intersections},
  journal={Journal of Algebraic Geometry},
  volume={20},
  number={2},
  year={2011},
  pages={239--262}}

@article{cheltsov/rigid,
  author={Ivan Cheltsov},
  title={Birationally rigid Fano varieties},
  journal={Russian Mathematical Surveys},
  volume={60},
  number={5},  
  year={2005},
  pages={875--965}}

@article{dimca/singularities,
  author={Alexandru Dimca},
  title={Singularities and coverings of weighted complete intersections},
  journal={Journal f{\"u}r die reine und angewandte Mathematik},
  volume={366},
  year={1986},
  pages={184--193}}

@inbook{dolgachev/weighted,
  author={Igor Dolgachev},
  title={Weighted projective varieties},
  booktitle={Group Actions and Vector Fields},
  series={Lecture Notes in Mathematics},
  number={956},
  publisher={Springer},
  year={1982},
  pages={34--71}}

@book{eisenbud/algebra,
  author={David Eisenbud},
  title={Commutative Algebra: with a View Toward Algebraic Geometry},
  series={Graduate Texts in Mathematics},
  number={150},  
  publisher={Springer},
  year={1995}}

@book{flajolet/combinatorics,
  author={Philippe Flajolet and Robert Sedgewick},
  title={Analytic Combinatorics},
  publisher={Cambridge University Press},
  year={2009}}

@mvbook{grothendieck/elements,
  author={Alexander Grothendieck},
  maintitle={\'El\'ements de géom\'etrie alg\'ebrique},
  title={\'Etude globale \'el\'ementaire de quelques classes de morphismes},
  volume={2},
  series={Publications mathématiques de l'I.H.\'E.S.},
  number={8},
  publisher={Institut des Hautes \'Etudes Scientifiques},
  year={1961}}

@book{hartshorne/geometry,
  author={Robin Hartshorne},
  title={Algebraic Geometry},
  series={Graduate Texts in Mathematics},
  number={47},
  publisher={Springer},
  year={1997}}

@inbook{ianofletcher/weighted,
  author={Anthony Iano-Fletcher},
  title={Working with weighted complete intersections},
  booktitle={Explicit Birational Geometry of 3-folds},
  series={London Mathematical Society Lecture Note},
  number={281},
  publisher={Cambridge University Press},
  year={2000},
  pages={101--173}}

@book{iskovskikh/fano,
  author={Vasilii Iskovskikh and Yuri Prokhorov},
  title={Fano Varieties},
  series={Encyclopaedia of Mathematical Sciences},
  number={47},
  publisher={Springer},
  year={1999}}

@article{kollar/boundness,
  author={J{\'a}nos Koll{\'a}r and Yoichi Miyaoka and Shigefumi Mori},
  title={Rational connectedness and boundedness of Fano manifolds},
  journal={Journal of Differential Geometry},
  volume={36},
  number={3},
  year={1992},
  pages={765--779}}

@article{kobayashi/characterizations,
  author={Shoshichi Kobayashi and Takushiro Ochiai},
  title={Characterizations of complex projective spaces and hyperquadrics},
  journal={Kyoto Journal of Mathematics},
  volume={13},
  number={1},
  year={1973},
  pages={31--47}}

@mvbook{lazarsfeld/positivity,
  author={Robert Lazarsfeld},
  maintitle={Positivity in Algebraic Geometry},
  title={Classical Setting: Line Bundles and Linear Series},
  volume={1},
  series={A Series of Modern Surveys in Mathematics},
  number={48},
  publisher={Springer},
  year={2004}}

@mvbook{mora/equation,
  author={Teo Mora},
  maintitle={Solving Polynomial Equation Systems},
  title={Algebraic Solving},
  volume={3},
  series={Encyclopedia of Mathematics and its Applications},
  number={157},
  publisher={Cambridge University Press},
  year={2015}}

@book{miller/algebra,
  author={Ezra Miller and Bernd Sturmfels},
  title={Combinatorial Commutative Algebra},
  series={Graduate Texts in Mathematics},
  number={227},
  publisher={Springer},
  year={2005}}

@article{okada/rationality,
  author={Takuzo Okada},
  title={Stable rationality of orbifold Fano threefold hypersurfaces},
  journal={Journal of Algebraic Geometry},
  volume={28},
  number={1},
  year={2019},
  pages={99--138}}

@book{przyjalkowski/weighted,
  author={Victor Przyjalkowski and Constantin Shramov},
  title={Weighted Complete Intersections},
  pubstate={forthcoming}}

@article{przyjalkowski/automorphisms,
  author={Victor Przyjalkowski and Constantin Shramov},
  title={Automorphisms of weighted complete intersections},
  journal={Proceedings of the Steklov Institute of Mathematics},
  volume={307},
  number={1},
  year={2019},
  pages={198--209}}

@article{przyjalkowski/bounds,
  author={Victor Przyjalkowski and Constantin Shramov},
  title={Bounds for smooth Fano weighted complete intersections},
  journal={Communications in Number Theory and Physics},
  volume={14}, 
  number={3},
  year={2020},
  pages={511--553}}

@article{przyjalkowski/codimension,
  author={Victor Przyjalkowski and Constantin Shramov},
  title={Fano weighted complete intersections of large codimension},
  journal={Siberian Mathematical Journal},
  volume={61},
  number={2},
  year={2020},
  pages={298--303}}

@article{przyjalkowski/hodge,
  author={Victor Przyjalkowski and Constantin Shramov},
  title={Hodge level for weighted complete intersections},
  journal={Collectanea Mathematica},
  volume={71},
  number={3},
  year={2020},
  pages={549--74}}

@article{przyjalkowski/on-automorphisms,
  author={Victor Przyjalkowski and Constantin Shramov},
  title={On automorphisms of quasi-smooth weighted complete intersections},
  journal={Sbornik: Mathematics},
  volume={212},
  number={3},
  year={2021},
  pages={374--388}}

@article{pizzato/nonvanishing,
  author={Marco Pizzato and Taro Sano and Luca Tasin},
  title={Effective nonvanishing for Fano weighted complete intersections},
  journal={Algebra \& Number Theory},
  volume={11},
  number={10},
  year={2017},
  pages={2369--2395}}

@book{pukhlikov/rigid,
  author={Aleksandr Pukhlikov},
  title={Birationally Rigid Varieties},
  series={Mathematical Surveys and Monographs},
  number={190},
  publisher={American Mathematical Society},
  year={2013}}

@mvbook{stanley/combinatorics,
  author={Richard Stanley},
  title={Enumerative Combinatorics},
  volume={1},
  series={Cambridge Studies in Advanced Mathematics},
  number={49},
  publisher={Cambridge University Press},
  year={2012}}

\end{document}